\documentclass[a4paper,12pt]{article}
\usepackage[english]{babel}
\usepackage[utf8]{inputenc}
\usepackage{hyperref}
\usepackage{geometry}


\usepackage{theoremref}
\usepackage{authblk}
\usepackage{amsmath}
\usepackage{amsfonts}
\usepackage{amssymb}
\usepackage{graphicx}
\usepackage{subcaption}
\usepackage{float}
\usepackage[nottoc,numbib]{tocbibind}
\usepackage{amsthm}	
\usepackage{enumerate}

\title{Fractional cross intersecting families}
\author[1]{Rogers Mathew}
\author[2]{Ritabrata Ray}
\author[3]{Shashank Srivastava}
\affil[1]{Department of Computer Science and Engineering,\protect\\ Indian Institute of Technology Kharagpur, Kharagpur 721302, India,
\protect\\ rogersmathew@gmail.com}
\affil[2]{Department of Electronics and Electrical Communication Engineering, \protect\\ Indian Institute of Technology Kharagpur, Kharagpur 721302, India,\protect\\ rayritabrata96@gmail.com}
\affil[3]{Toyota Technological Institute at Chicago, Chicago 60615, USA, \protect\\ shashanksri47@gmail.com}

\date{\vspace{-5ex}}

\theoremstyle{plain}
\newtheorem{theorem}{Theorem}[section]
\newtheorem{lemma}[theorem]{Lemma}
\newtheorem{proposition}[theorem]{Proposition}
\newtheorem{corollary}[theorem]{Corollary}
\newtheorem{claim}[theorem]{Claim}

\newtheorem{definition}{Definition}
\newtheorem{observation}{Observation}
\newtheorem{construction}{Construction}


\begin{document}

 \maketitle
 \begin{abstract}
Let $\mathcal{A}=\{A_{1},...,A_{p}\}$ and $\mathcal{B}=\{B_{1},...,B_{q}\}$ be two families of subsets of $[n]$ such that for every $i\in [p]$ and $j\in [q]$, $|A_{i}\cap B_{j}|= \frac{c}{d}|B_{j}|$, where $\frac{c}{d}\in [0,1]$ is an irreducible fraction. We call such families \emph{$\frac{c}{d}$-cross intersecting families}. In this paper, we find a tight upper bound for the product  $|\mathcal{A}||\mathcal{B}|$ and characterize the cases when this bound is achieved for $\frac{c}{d}=\frac{1}{2}$. Also, we find a tight upper bound on $|\mathcal{A}||\mathcal{B}|$ when $\mathcal{B}$ is $k$-uniform and characterize, for all $\frac{c}{d}$, the cases when this bound is achieved.
\end{abstract}
 \begin{section}{Introduction}
 Let $[n]$ denote $\{1,...,n\}$ and let $2^{[n]}$ denote the power set of $[n]$.We shall use $\binom{[n]}{k}$ to denote the set of all $k$-sized subsets of $[n]$. Let $\mathcal{F} \subseteq 2^{[n]}$. The family $\mathcal{F}$ is an \emph{intersecting family} if every two sets in $\mathcal{F}$ intersect with each other. The famous Erd\H{o}s-Ko-Rado Theorem \cite{10.1093/qmath/12.1.313} states that $|\mathcal{F}|\leq \binom{n-1}{k-1}$ if $\mathcal{F}$ is a $k$-uniform intersecting family, where $2k\leq n$. Several variants of the notion of intersecting families have been extensively studied in the literature. Given a set $L=\{ l_{1},\ldots,l_{s} \}$ of non-negative integers, a family $\mathcal{F} \subseteq 2^{[n]}$ is \emph{$L$-intersecting} if for all $F_{i},F_{j}\in \mathcal{F}, F_{i}\neq F_{j}, |F_{i} \cap F_{j}| \in L$. Ray-Chaudhuri and Wilson in \cite{ray-chaudhuri1975} showed that if $\mathcal{F}$ is $k$-uniform and $L$-intersecting, then $|\mathcal{F}|\leq \binom{n}{s}$ and the bound is tight. Frankl and Wilson in \cite{article1} showed a tight upper bound of $\binom{n}{s} + \binom{n}{s-1} + \cdots + \binom{n}{0}$ if the restriction on the cardinalities of the sets of an $L$-intersecting family is relaxed. Further, if $L$ is a singleton set, then Fisher inequality \cite{bose1949note} gives an upper bound of $|\mathcal{F}|\leq n$ for the cardinality of an $L$-intersecting family $\mathcal{F}$. Recently, in \cite{DBLP:journals/corr/abs-1803-03954}, Balachandran et al. introduced a fractional variant of the classical $L$-intersecting families. For a survey on intersecting families, see \cite{article}. \par
 Two families $\mathcal{A}, \mathcal{B} \subseteq 2^{[n]}$ are \emph{cross-intersecting} if $|A\cap B|>0$, $\forall$ $A \in \mathcal{A}$,$ B \in \mathcal{B}$. Pyber in \cite{PYBER198685} showed that if $n\geq 2k$, and $\mathcal{A},\mathcal{B} \subseteq \binom{[n]}{k}$ is a cross-intersecting pair of families, then $|\mathcal{A}||\mathcal{B}|\leq \binom{n-1}{k-1}^2$. Frankl et al. in \cite{FRANKL2014207} showed that if $\mathcal{A},\mathcal{B} \subset \binom{[n]}{k}$ such that $|A \cap B| \geq t$ for all $A \in \mathcal{A}$ and $ B \in \mathcal{B}$, then for all $n \geq (t+1)(k-t+1)$, $|\mathcal{A}||\mathcal{B}|\leq \binom{n-t}{k-t}^2$, the cross-intersecting version of the  Erd\H{o}s-Ko-Rado Theorem. A cross-intersecting pair of families $\mathcal{A},\mathcal{B}\subseteq 2^{[n]}$ is said to be $l$-cross-intersecting if $\forall A \in \mathcal{A},~ B \in \mathcal{B}$, $|A\cap B|=l$, for some positive integer $l$. 
 Ahlswede, Cai and Zhang showed in \cite{AHLSWEDE198975}, for all $n \geq 2l$, a simple construction of an $l$-cross-intersecting pair $(\mathcal{A},\mathcal{B})$ of families of subsets of $[n]$ with $|\mathcal{A}||\mathcal{B}|=\binom{2l}{l}2^{n-2l}=\Theta(\frac{2^n}{\sqrt l})$. Later Alon and Lubetzky in \cite{Alon2009} showed that the $\Theta(\frac{2^n}{\sqrt l})$ bound is tight and characterized the cases when the bound is achieved.\par
 In this paper, we introduce a fractional variant of the $l$-cross-intersecting families.
 Let $\mathcal{A}=\{A_{1},...,A_{p}\}$ and $\mathcal{B}=\{B_{1},...,B_{q}\}$ be two families of subsets of $[n]$ such that for every $i\in [p]$ and $j\in [q]$, $|A_{i}\cap B_{j}|= \frac{c}{d}|B_{j}|$, where $\frac{c}{d}\in [0,1]$ is an irreducible fraction. We call such an $(\mathcal{A},\mathcal{B})$ pair a $\frac{c}{d}$-cross-intersecting pair of families. Given $c$, $d$, and $n$, let $\mathcal{M}_{\frac{c}{d}}(n)$ denote the maximum value of $|\mathcal{A}||\mathcal{B}|$ where $(\mathcal{A},\mathcal{B})$ is a $\frac{c}{d}$-cross intersecting pair of families of subsets of $[n]$.
 We have the following results:

\begin{theorem}\thlabel{thm:1.1}
 $\mathcal{M}_{\frac{c}{d}}(n) = 2^n$
\end{theorem}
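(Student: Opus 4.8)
The plan is to establish the two inequalities $\mathcal{M}_{\frac{c}{d}}(n)\ge 2^n$ and $\mathcal{M}_{\frac{c}{d}}(n)\le 2^n$ separately. For the lower bound I would simply take $\mathcal{A}=2^{[n]}$ and $\mathcal{B}=\{\emptyset\}$: since $|A\cap\emptyset|=0=\tfrac{c}{d}|\emptyset|$ for every $A\subseteq[n]$, this is a legitimate $\tfrac{c}{d}$-cross-intersecting pair with $|\mathcal{A}||\mathcal{B}|=2^n$. (For $\tfrac{c}{d}=\tfrac12$ more ``balanced'' pairs also attain $2^n$, e.g.\ pairing up the coordinates of $[n]$ and letting $\mathcal{B}$ consist of all unions of pairs and $\mathcal{A}$ of all transversals of the pairs; this will matter for the later characterization, but for the present statement the trivial pair already suffices.)

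For the upper bound I would pass to linear algebra over $\mathbb{R}$. Identify each set with its $0/1$ characteristic vector: write $\mathbf{a}_i,\mathbf{b}_j\in\{0,1\}^n$ for $A_i,B_j$ and let $\mathbf{1}$ denote the all-ones vector. Then $|A_i\cap B_j|=\langle\mathbf{a}_i,\mathbf{b}_j\rangle$ and $|B_j|=\langle\mathbf{1},\mathbf{b}_j\rangle$, so the defining condition $|A_i\cap B_j|=\tfrac{c}{d}|B_j|$ is exactly $\langle\, d\,\mathbf{a}_i-c\,\mathbf{1},\ \mathbf{b}_j\rangle=0$ for all $i\in[p]$, $j\in[q]$. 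Let $V=\operatorname{span}_{\mathbb{R}}\{\mathbf{b}_1,\dots,\mathbf{b}_q\}$ and $r=\dim V$. The identities above say $d\,\mathbf{a}_i-c\,\mathbf{1}\in V^{\perp}$ for every $i$, and since $V^{\perp}$ is a linear subspace this is equivalent to $\mathbf{a}_i\in W:=\tfrac{c}{d}\mathbf{1}+V^{\perp}$, an affine subspace of dimension $n-r$. Thus the $\mathbf{a}_i$ are distinct points of $W\cap\{0,1\}^n$ and the $\mathbf{b}_j$ are distinct points of $V\cap\{0,1\}^n$.

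The engine is the elementary fact that an affine subspace of $\mathbb{R}^n$ of dimension $m$ contains at most $2^m$ points of the cube $\{0,1\}^n$: one picks $m$ coordinates on which the underlying $m$-dimensional direction space projects injectively, notes that the affine subspace then projects injectively as well, and observes that this coordinate projection sends cube points to cube points, hence injects $W\cap\{0,1\}^n$ into $\{0,1\}^m$. Applying this to $V$ gives $|\mathcal{B}|\le 2^{r}$ and applying it to $W$ gives $|\mathcal{A}|\le 2^{n-r}$, so $|\mathcal{A}||\mathcal{B}|\le 2^{r}\cdot 2^{n-r}=2^n$, which combined with the construction proves the theorem. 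The only place needing care is the $\mathcal{A}$-side: the vectors $d\,\mathbf{a}_i-c\,\mathbf{1}$ are not Boolean, so one must not bound them directly but instead translate back to the affine coset $W$ that genuinely contains the Boolean vectors $\mathbf{a}_i$. Beyond that, the main (and rather modest) obstacle is just proving the ``affine subspace meets the cube in at most $2^{\dim}$ points'' lemma cleanly; once it is in hand, the complementary dimensions $r$ and $n-r$ make the two bounds multiply to exactly $2^n$.
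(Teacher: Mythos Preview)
Your proof is correct and takes a genuinely different route from the paper's. The paper works over $\mathbb{F}_2$: it observes that the $\tfrac{c}{d}$-condition forces $\langle \chi(A),\chi(B)\rangle$ to be a fixed value modulo $2$ depending only on the residue of $|B|$ modulo $2d$, appends one extra coordinate to the characteristic vectors so as to arrange $\langle a',b'\rangle\equiv 0\pmod 2$ uniformly, and then uses the dimension inequality $\dim(\mathrm{span}\,\mathcal{A}')+\dim(\mathrm{span}\,\mathcal{B}')\le n+1$ in $\mathbb{F}_2^{\,n+1}$ together with a short parity argument (Lemma~3.1/Corollary~3.2) to recover the lost factor of $2$. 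You instead work over $\mathbb{R}$, encode the \emph{exact} (not mod-$2$) relation as $\langle d\,\mathbf{a}_i-c\,\mathbf{1},\mathbf{b}_j\rangle=0$, and bound the number of Boolean points in a real affine subspace by $2^{\dim}$; this is cleaner, avoids the auxiliary coordinate and the $\mathcal{B}_1/\mathcal{B}_2$ parity split, and makes the complementary dimensions $r$ and $n-r$ appear directly. What the paper's $\mathbb{F}_2$ setup buys is downstream: in the equality case it forces $\mathcal{B}'=\mathrm{span}(\mathcal{B}')$, so $\mathcal{B}$ inherits the structure of a linear (indeed self-orthogonal) binary code, and that is the engine driving the characterization of maximal pairs in \thref{thm:1.2}. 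Your real-linear-algebra argument proves \thref{thm:1.1} more directly but does not by itself hand you that code structure.
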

When $\frac{c}{d}=0$, $\mathcal{A}=2^{[n]}$, $\mathcal{B}=\{\emptyset\}$ is a maximal pair. In fact, $\mathcal{A}=2^{[k]}$, $\mathcal{B}=\mathcal{P}(S)$, where $\mathcal{P}(S)$ is the power set of $S=\{k+1,\ldots,n\}$, are the only maximal pairs up to a relabelling of the elements, $0\leq k \leq n$.
When $\frac{c}{d}=1$, $\mathcal{A}=\{[n]\}$ and $\mathcal{B}=2^{[n]}$ is a maximal pair. In fact, $\mathcal{B}=2^{[k]}$, $\mathcal{A}=\{A: A=[k]\cup T$, where $ T \in \mathcal{P}(S)\}$, where $\mathcal{P}(S)$ is the power set of $S=\{k+1,\ldots,n\}$, are the only maximal pairs up to a relabelling of the elements, $0\leq k \leq n$.
In \thref{thm:1.2}, we characterize all maximal pairs when $\frac{c}{d}=\frac{1}{2}$.
\begin{theorem}\thlabel{thm:1.2}
 Let $(\mathcal{A},\mathcal{B})$ be a $\frac{1}{2}$-cross intersecting pair of families of subsets of $[n]$ with $|\mathcal{A}||\mathcal{B}|=2^n$. Then $(\mathcal{A},\mathcal{B})$ is one of the following $\lfloor\frac{n}{2}\rfloor +1$ pairs of families $(\mathcal{A}_{k},\mathcal{B}_{k})$, $0 \leq k \leq \lfloor \frac{n}{2} \rfloor$, up to isomorphism.
 \begin{center}
     $\mathcal{A}_{0}=2^{[n]}$ and $\mathcal{B}_{0}=\{\emptyset\}$
 \end{center}
 \begin{center}
     $\mathcal{A}_{k}=\{ A \in 2^{[n]} : |A \cap \{ 2i-1, 2i\}|= 1 ~~\forall i, 1\leq i\leq k\}$
 \end{center}
 \begin{center}
     $\mathcal{B}_{k}=\{ B \in 2^{[n]} : |B \cap \{ 2i-1, 2i \}| \in \{ 0, 2\}~~ \forall i, 1\leq i\leq k $ and  $\forall j > 2k$, $j \notin B\}$,
     \end{center}
    where $1\leq k \leq \lfloor \frac{n}{2} \rfloor$.
\end{theorem}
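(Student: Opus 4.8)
The plan is to recast the problem in linear algebra over $\mathbb{R}$ and then read off the extremal structure from the equality case of the bound in \thref{thm:1.1}.

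First I would encode sets as vectors. Identify $A\subseteq[n]$ with its $\pm1$ indicator $\epsilon_A:=2\chi_A-\mathbf 1\in\{-1,1\}^n$ and $B\subseteq[n]$ with its ordinary indicator $\chi_B\in\{0,1\}^n$. Since $|A\cap B|=\langle\chi_A,\chi_B\rangle$ and $|B|=\langle\mathbf 1,\chi_B\rangle$, the hypothesis $|A_i\cap B_j|=\tfrac12|B_j|$ is equivalent to $\langle\epsilon_{A_i},\chi_{B_j}\rangle=0$ for all $i,j$ (and forces every member of $\mathcal B$ to have even size). Set $V:=\mathrm{span}_{\mathbb R}\{\epsilon_A:A\in\mathcal A\}$, $r:=\dim V$, and $W:=\mathrm{span}_{\mathbb R}\{\chi_B:B\in\mathcal B\}$; then $W\subseteq V^{\perp}$. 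Using the elementary fact that a $d$-dimensional affine subspace of $\mathbb R^n$ contains at most $2^d$ vertices of the unit cube (it projects injectively onto some $d$ of the coordinates), we get $|\mathcal A|=\bigl|\{\epsilon_A:A\in\mathcal A\}\bigr|\le 2^{r}$ and $|\mathcal B|\le 2^{\dim W}\le 2^{n-r}$. Hence $|\mathcal A||\mathcal B|\le 2^n$, and when equality holds every inequality above is tight: $\dim W=n-r$, so $W=V^{\perp}$; $\{\epsilon_A:A\in\mathcal A\}=V\cap\{-1,1\}^n$, i.e.\ $\mathcal A=\{A\subseteq[n]:\epsilon_A\in V\}$; and $\mathcal B=W\cap\{0,1\}^n$. (If $\mathcal A$ or $\mathcal B$ is empty the product is $0$, so both are nonempty.)

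Next I would determine $W$ purely from the $0/1$ side. Call $j,j'\in[n]$ equivalent if $\chi_B(j)=\chi_B(j')$ for all $B\in\mathcal B$, let $S_0$ be the set of coordinates that vanish on every $B\in\mathcal B$, and let $S_1,\dots,S_p$ be the remaining equivalence classes. Each $B\in\mathcal B$ is then a union of some $S_i$'s, so $\mathcal B$ injects into $2^{[p]}$ and $W\subseteq\mathrm{span}\{\chi_{S_1},\dots,\chi_{S_p}\}$, giving $\dim W\le p$. For the converse one uses that $|\mathcal B|=2^{\dim W}$ is the maximum possible: projecting $W$ isomorphically onto a coordinate set $T$ of size $\dim W$, the image of $\mathcal B$ must be all of $\{0,1\}^{T}$, which forces every coordinate outside $T$ to be either identically zero or an exact duplicate of a coordinate in $T$ (a $\{0,1\}$-valued linear functional on $\{0,1\}^{T}$ is a single coordinate projection); by the definitions of $S_0$ and of the classes this is impossible unless every block $S_i$ already contains a coordinate of $T$, so $p\le|T|=\dim W$. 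Hence $p=\dim W=n-r$; writing $m:=n-r$ we obtain $W=\mathrm{span}\{\chi_{S_1},\dots,\chi_{S_m}\}$ and $\mathcal B=\bigl\{\bigcup_{i\in I}S_i:\ I\subseteq[m]\bigr\}$.

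Finally I would combine this with the $\pm1$ side. Since $W=\mathrm{span}\{\chi_{S_1},\dots,\chi_{S_m}\}$, we have $V=W^{\perp}=\{y\in\mathbb R^n:\sum_{j\in S_i}y_j=0\text{ for }1\le i\le m\}$, so $\mathcal A=\{A\subseteq[n]:|A\cap S_i|=\tfrac12|S_i|\text{ for all }i\}$; in particular each $|S_i|$ is even (as $\mathcal A\ne\emptyset$) and $|\mathcal A|=2^{|S_0|}\prod_{i=1}^m\binom{|S_i|}{|S_i|/2}$. Equating this with $|\mathcal A|=2^{r}=2^{|S_0|}\prod_{i=1}^m 2^{|S_i|-1}$ gives $\prod_{i=1}^m\binom{|S_i|}{|S_i|/2}=\prod_{i=1}^m 2^{|S_i|-1}$, and since $\binom{s}{s/2}\le 2^{s-1}$ for every even $s\ge 2$ with equality only when $s=2$, every block $S_i$ has size exactly $2$. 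Setting $k:=m$ (so $2k\le n$, i.e.\ $0\le k\le\lfloor n/2\rfloor$) and relabelling the ground set so that $S_i=\{2i-1,2i\}$ for $1\le i\le k$ and $S_0=\{2k+1,\dots,n\}$, the pair $(\mathcal A,\mathcal B)$ is precisely $(\mathcal A_k,\mathcal B_k)$, with $k=0$ giving $(\mathcal A_0,\mathcal B_0)$. A one-line check shows each $(\mathcal A_k,\mathcal B_k)$ is a $\tfrac12$-cross intersecting pair with $|\mathcal A_k||\mathcal B_k|=2^{n-k}\cdot 2^{k}=2^n$, and distinct values of $k$ give non-isomorphic pairs since $|\mathcal A_k|=2^{n-k}$; hence there are exactly $\lfloor n/2\rfloor+1$ of them. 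The step I expect to be the crux is the middle one — showing $W$ is a ``block'' subspace whose number of blocks equals its dimension — because $\pm1$-extremality of $V$ alone does not force blocks of size $2$ (for instance $\mathrm{span}(1,1,-1)\subseteq\mathbb R^3$ already meets $\{-1,1\}^3$ in $2^{1}$ points); it is the coupling $W=V^{\perp}$ of the two extremality conditions that finally pins down the block sizes.
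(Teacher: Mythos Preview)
Your argument is correct and takes a genuinely different route from the paper. The paper works over $\mathbb{F}_2$: it appends a bit to embed $\mathcal A',\mathcal B'$ in $\mathbb{F}_2^{n+1}$, proves the bound via orthogonality there, and for the equality case shows that $\mathcal B$ is a self-orthogonal linear code closed under intersection (Propositions~4.1--4.4), then builds a pairwise disjoint basis $B_1,\dots,B_k$ of $\mathcal B$ by iterated intersections before finishing with the identity $\prod_j\binom{2i_j}{i_j}=\prod_j 2^{2i_j-1}\Rightarrow i_j=1$. You instead work over $\mathbb{R}$ with the $\pm1$ encoding, so that the $\tfrac12$-condition becomes genuine orthogonality $\langle\epsilon_A,\chi_B\rangle=0$ without any bit-appending; the bound comes from the cube-in-subspace count, and the equality analysis goes via the coordinate-duplication argument (a $\{0,1\}$-valued real linear functional on $\{0,1\}^T$ is a coordinate projection or zero) to extract the block structure of $W$ directly, bypassing self-orthogonality and closure under intersection entirely. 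Both proofs converge on the same final binomial-product step.

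What each approach buys: the paper's $\mathbb{F}_2$ framework proves the upper bound $2^n$ uniformly for every $\tfrac{c}{d}$ (Theorem~1.1), and the $\tfrac12$ characterisation then reuses that machinery. Your real-orthogonality trick is specific to $\tfrac12$ --- that is exactly the fraction for which $2|A\cap B|-|B|=0$ --- but in exchange it is shorter and avoids the parity casework of Propositions~4.2 and~4.3. Your ``crux'' step (showing $p=\dim W$) is indeed the heart of the argument and is handled cleanly; the key observation that the projection of $\mathcal B$ onto $T$ is \emph{all} of $\{0,1\}^T$ is what makes the linear-functional classification bite.
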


It would be interesting to show a characterization theorem for any $\frac{c}{d}\in [0,1]$. We do have such a general characterization theorem (along with a new tight upper bound) in \thref{thm:1.3} for the case when $\mathcal{B}$ is $k$-uniform. The proof is a direct application of Theorem 1.1 in \cite{Alon2009}.
\begin{theorem}\thlabel{thm:1.3}
 Let $(\mathcal{A}$,$\mathcal{B})$ be a $\frac{c}{d}$-cross intersecting pair of families of subsets of $[n]$. Let $\mathcal{B}$ be $k$-uniform. Then, there exists some $k_{0}>0$, such that for $k>k_{0}$ we have 
 \begin{center}
     $|\mathcal{A}||\mathcal{B}|\leq \binom{\frac{2ck}{d}}{\frac{ck}{d}}2^{n-\frac{2ck}{d}}$
 \end{center}
 and the bound is tight if and only if, either $(a)$ or $(b)$ hold:
 \begin{center}
 \begin{enumerate}[(a)]
     \item  When $\frac{c}{d}=1$, $\mathcal{A}=\{\{1,\ldots,\kappa\}\}\times2^Y$, $\mathcal{B}=\binom{[\kappa]}{k}$ where $Y=\{\kappa+1,\ldots,n\}$ and $\kappa \in \{ 2k-1, 2k\}$ up to a relabelling of the elements of $[n]$.

 \item When $\frac{c}{d}\neq 1$:
 
     \begin{enumerate}[(i)]
     \item If $k$ is even, $c=1$, $d=2$, $\frac{ck}{d}=\lceil\frac{k}{2}\rceil$,
     \item If $k$ is odd, $c=\frac{k+1}{2}$, $d=k$, $\frac{ck}{d}=\lceil\frac{k}{2}\rceil$, 
      \end{enumerate}
\end{enumerate}
 \end{center}
 and for both the cases($(i)$ and $(ii)$), 
 there exists some $\tau$ such that, 
 $k+\tau\leq n$ 
and up to a relabelling of the elements of $[n]$,
\begin{center}
    $\mathcal{A}=
    \{ \cup_{T\in J} \, {T} : J \subset
    \{\{1,k+1\},\ldots,\{\tau,
    k+\tau\},\{\tau+1\},\ldots,\{k\}\},|J|=\lceil\frac{k}{2}\rceil\}
    \times 2^{X}$
\end{center}
where $X=\{k+\tau+1,\ldots,n\}$ and
\begin{center}
$\mathcal{B}=\{L\cup\{\tau+1,\ldots, k\}:L\subset \{1,\ldots,\tau,k+1,\ldots,k+\tau\}, |L\cap\{i,k+i\}|=1$ for all $i \in [\tau] \}$.
    
\end{center}

\end{theorem}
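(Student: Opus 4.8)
The plan is to use the $k$-uniformity of $\mathcal{B}$ to reduce to an ordinary $l$-cross intersecting pair and then quote Theorem~1.1 of \cite{Alon2009}. For every $A\in\mathcal{A}$ and $B\in\mathcal{B}$ one has $|A\cap B|=\frac{c}{d}|B|=\frac{ck}{d}$; if $\mathcal{A}$ and $\mathcal{B}$ are both non-empty the left-hand side is a fixed integer, so $d\mid k$ and $l:=\frac{ck}{d}$ is a positive integer for which $(\mathcal{A},\mathcal{B})$ is an $l$-cross intersecting pair of subsets of $[n]$ (if one of the families is empty the bound holds trivially). Theorem~1.1 of \cite{Alon2009} supplies a threshold $l_{0}$ such that every $l$-cross intersecting pair with $l>l_{0}$ satisfies $|\mathcal{A}||\mathcal{B}|\le\binom{2l}{l}2^{n-2l}$; setting $k_{0}:=\lceil l_{0}d/c\rceil$ makes $l>l_{0}$ whenever $k>k_{0}$, and substituting $l=\frac{ck}{d}$ gives exactly $|\mathcal{A}||\mathcal{B}|\le\binom{2ck/d}{ck/d}2^{n-2ck/d}$.

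For the equality characterization I would invoke the second half of Theorem~1.1 of \cite{Alon2009}, which lists, up to a relabelling of $[n]$, all $l$-cross intersecting pairs attaining $\binom{2l}{l}2^{n-2l}$; the work is then to decide which of these have $\mathcal{B}$ $k$-uniform and to translate the resulting constraints into conditions on $c,d,k$ via the identity $\frac{c}{d}=\frac{l}{k}$ (in lowest terms). I expect two regimes. When $l=k$, i.e. $\frac{c}{d}=1$, the extremal pairs with $\mathcal{B}\subseteq\binom{[n]}{k}$ are exactly $\mathcal{B}=\binom{[\kappa]}{k}$ and $\mathcal{A}=\{[\kappa]\}\times2^{[n]\setminus[\kappa]}$ with $\kappa\in\{2k-1,2k\}$ --- the two values producing the same product because $\binom{2k}{k}=2\binom{2k-1}{k}$ --- which is case~$(a)$. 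When $l<k$, the $k$-uniformity of $\mathcal{B}$ in the extremal structure forces $|B|=k\in\{2l-1,2l\}$, i.e. $l=\lceil k/2\rceil$; combining this with $l=\frac{ck}{d}$ and reducing the fraction leaves precisely $(b)(i)$ (if $k$ is even then $\frac{c}{d}=\frac{k/2}{k}=\frac12$) and $(b)(ii)$ (if $k$ is odd then $\frac{c}{d}=\frac{(k+1)/2}{k}$, already irreducible, so $c=\frac{k+1}{2},d=k$). In this regime the extremal pair is, after relabelling, the ``block'' configuration displayed in the theorem, where $\tau$ is the number of two-element blocks; the side condition $k+\tau\le n$ is merely the requirement that the $k+\tau$ active coordinates fit inside $[n]$, and $\tau=0$ is always admissible since $n\ge k$.

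It remains to verify the converse direction for each pair listed in $(a)$ and $(b)$: that $\mathcal{B}$ is $k$-uniform, that $|A\cap B|=\frac{c}{d}|B|$ for all $A\in\mathcal{A},B\in\mathcal{B}$ --- in case~$(b)$ this holds because each of the $\lceil k/2\rceil$ blocks $\mathcal{A}$ selects meets every $B\in\mathcal{B}$ in exactly one point (a singleton block because all its points lie in $B$, a pair block because $B$ takes exactly one of its two points) --- and that $|\mathcal{A}||\mathcal{B}|=\binom{k}{\lceil k/2\rceil}2^{n-k}=\binom{2ck/d}{ck/d}2^{n-2ck/d}$, the last step again using $\binom{k+1}{(k+1)/2}=2\binom{k}{(k+1)/2}$ for odd $k$. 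The one genuinely non-routine step is the middle paragraph: reading off from the Alon--Lubetzky list exactly which extremal $l$-cross intersecting pairs have a uniform second family and matching their parametrisation to the families $(\mathcal{A},\mathcal{B})$ written above; everything else is either the cited theorem or a short computation.
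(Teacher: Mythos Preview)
Your approach is essentially identical to the paper's: reduce the $\frac{c}{d}$-cross intersecting condition to an ordinary $l$-cross intersecting condition via the $k$-uniformity of $\mathcal{B}$ (with $l=\frac{ck}{d}$), invoke Theorem~1.1 of Alon--Lubetzky for both the bound and the equality characterization, and then sift through the Alon--Lubetzky extremal list to determine which configurations have $\mathcal{B}$ $k$-uniform. The paper organizes the case split by which of the two Alon--Lubetzky families corresponds to $\mathcal{B}$ (leading to $\kappa=k$ in one case and $\tau=0$, $k=l$ in the other), whereas you split by $l=k$ versus $l<k$, but the arithmetic of solving $k=\kappa\in\{2l-1,2l\}$ together with $l=\frac{ck}{d}$ and $\gcd(c,d)=1$, and the resulting conclusions, are the same.
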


\end{section}
  
\begin{section}{Notations and definitions}
Given any $S \subseteq [n]$, we shall use  $\chi(S)$ to denote the \textit{characteristic vector} of $S$ which is a $0-1$ vector of size $n$ having its $i^{th}$ entry equal to $1$ if and only if $i \in S$. The \textit{weight} of a vector is the number of non-zero entries it has, and hence weight of $\chi(S)$ is the same as $|S|$. \par

For any family $\mathcal{A} \subseteq 2^{[n]}$, we shall (ab)use $\mathcal{A}$ to denote the collection of characteristic vectors of the members of $\mathcal{A}$ as well. The meaning will be clearly stated if not clear from the context.\par
Let $V$ be a collection of vectors in $\mathbb{F}^{n}_{2}$. Then, we define the following: 
\begin{enumerate}
    \item $span(V)$: The collection of all the vectors that can be expressed as a linear combination in $\mathbb{F}_2$ of the vectors of $V$. We know that $span(V)$ is a vector space over $\mathbb{F}_{2}$.
    \item $basis(V)$: We use $basis(V)$ to denote the basis of $span(V)$.
    \item $dim(V)$: $dim(V)=|basis(V)|$
\end{enumerate}
\begin{definition}
 $V\subseteq \mathbb{F}^{n}_{2} $ is a \emph{linear code} if $V=span(V)$.
\end{definition}
\begin{definition}
 Given a linear code $C\subseteq \mathbb{F}^{n}_{2}$, the \emph{dual code} $C^{\perp}$ is defined as,
 \begin{center}
   $C^{\perp}=\{ x \in \mathbb{F}^{n}_{2} | \langle x,c \rangle =0, \forall c \in C\}$  
 \end{center}
 where $\langle x, y\rangle$ is the standard inner product over $\mathbb{F}_{2}$.
\end{definition}
The following is a well-known fact that is easy to verify.
\begin{lemma}
 If $C \subseteq \mathbb{F}^{n}_{2}$ is a linear code, then $C^{\perp}$ is also a linear code.
\end{lemma}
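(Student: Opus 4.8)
The plan is to prove that $C^{\perp}$ equals its own span, which, by the definition of a linear code given above, is precisely the assertion to be established. Since the inclusion $C^{\perp} \subseteq span(C^{\perp})$ holds trivially (every vector lies in the span of the set containing it), the entire content of the lemma is the reverse inclusion $span(C^{\perp}) \subseteq C^{\perp}$; equivalently, it suffices to show that $C^{\perp}$ is closed under the formation of $\mathbb{F}_2$-linear combinations.

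First I would record that $C^{\perp}$ is nonempty, since the zero vector $\mathbf{0}$ satisfies $\langle \mathbf{0}, c\rangle = 0$ for every $c \in C$, so $\mathbf{0} \in C^{\perp}$. The key step is then to invoke the bilinearity of the standard inner product over $\mathbb{F}_2$: given any $x, y \in C^{\perp}$ and any $c \in C$, we have $\langle x+y, c\rangle = \langle x, c\rangle + \langle y, c\rangle = 0 + 0 = 0$, whence $x+y \in C^{\perp}$. This establishes closure under addition. Because the only scalars in $\mathbb{F}_2$ are $0$ and $1$, closure under scalar multiplication is automatic: $0 \cdot x = \mathbf{0}$ and $1 \cdot x = x$ both already lie in $C^{\perp}$, so no separate verification is needed.

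Combining these observations, any $\mathbb{F}_2$-linear combination of elements of $C^{\perp}$ is simply a finite sum of such elements, and hence lies in $C^{\perp}$ by repeated application of closure under addition. Therefore $span(C^{\perp}) \subseteq C^{\perp}$, which together with the trivial inclusion gives $span(C^{\perp}) = C^{\perp}$; by definition, $C^{\perp}$ is a linear code.

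Since the lemma is elementary, there is no real obstacle. The only point meriting mild care is the reduction of arbitrary $\mathbb{F}_2$-linear combinations to finite sums, so that verifying membership of $\mathbf{0}$ together with closure under addition already certifies $C^{\perp} = span(C^{\perp})$. I would note in passing that this argument never uses the hypothesis that $C$ is itself linear---the orthogonal complement of \emph{any} subset of $\mathbb{F}^{n}_{2}$ is a subspace---but I would phrase the write-up for the linear code $C$ exactly as stated in the lemma.
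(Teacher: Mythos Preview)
Your proof is correct and is exactly the routine verification the paper has in mind; the paper itself does not spell out any argument, simply calling the lemma ``a well-known fact that is easy to verify.'' Your remark that the hypothesis that $C$ is linear is unnecessary is also accurate.
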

  \begin{definition}
   Self orthogonal and self dual codes:
   A code $C$ is \textit{self orthogonal} if $C\subseteq C^{\perp} $
   and it is \textit{self dual} if $C=C^{\perp}$.
  \end{definition}
 \end{section}
 \begin{section}{Bounding $\mathcal{M}_{\frac{c}{d}}(n)$}
 Let $(\mathcal{A},\mathcal{B})$ be a  $\frac{c}{d}$-cross-intersecting pair of families of subsets of $[n]$, where $\frac{c}{d} \in [0,1]$ is an irreducible fraction. We shall (ab)use $\mathcal{A},\mathcal{B}$ to denote the set of characteristic vectors of the sets in $\mathcal{A},\mathcal{B}$ respectively.
 For any $a \in \mathcal{A}, b \in \mathcal{B}$, we observe that $\langle a, b \rangle \equiv |A\cap B| ~(\mathrm{mod}~ 2)$, where $a = \chi (A)$, $b = \chi (B)$.\par
 Partition the family $\mathcal{B}$ into two parts as,
 \begin{center}
 \begin{gather}
     \mathcal{B}_{1}=\{B\in \mathcal{B} : |B| \equiv 0 ~(\mathrm{mod}~ 2d)\}\\
     \mathcal{B}_{2}=\{B\in \mathcal{B} : |B| \equiv d ~(\mathrm{mod}~ 2d)\}
 \end{gather}
 \end{center}
   
 As all the sets $B\in \mathcal{B}$ have their cardinality $|B|$ divisible by $d$, $\{\mathcal{B}_{1}$,$\mathcal{B}_{2}\}$ is a valid partition of $\mathcal{B}$.
 Therefore  $\forall a \in \mathcal{A}$ , $b \in \mathcal{B}$, using the $\frac{c}{d}$ intersection property, we have:
 \begin{center}
 \begin{gather*}
     \langle a, b \rangle= 
     \begin{cases}
     1, if \: b \in \mathcal{B}_{2} \:and\: c \:is\: odd \\
     0, \: otherwise
      \end{cases}
 \end{gather*}

 \end{center}
 \begin{construction}
 Construct a set $\mathcal{B}^{'}_{1}$, by appending  a $0$ to the left of every vector in $\mathcal{B}_{1}$, and a set $\mathcal{B}^{'}_{2}$ by appending a 1 to the left of every vector in $\mathcal{B}_{2}$. Let  $\mathcal{B}^{'}=\mathcal{B}^{'}_{1} \cup \mathcal{B}^{'}_{2}$.
 Construct a set $\mathcal{A}^{'}$ by appending a $1$ to the left of every vector in $\mathcal{A}$. 
 \end{construction}
 We now have, the value of
 
  \begin{center}
      $\langle a,b\rangle=0 ~~ \forall a \in \mathcal{A}^{'}$, $b \in \mathcal{B}^{'}$
  \end{center}
  So, $(span(\mathcal{A}^{'}),span(\mathcal{B}^{'}))$ is a pair of mutually orthogonal subspaces of $\mathbb{F}^{n+1}_{2}$ over $\mathbb{F}_{2}$. We thus have, 
  \begin{center}
      $dim(span(\mathcal{A}^{'})) + dim(span(\mathcal{B}^{'}))\leq n+1$
  \end{center}
  So, it follows that
   \begin{equation}\label{eq:1}
  \begin{split}
    |\text{span}(\mathcal{A}^{'})|\cdot|\text{span}(\mathcal{B}^{'})| & = 2^{\text{dim}(span(\mathcal{A}^{'}))}\cdot2^{\text{dim}((span(\mathcal{B}^{'}))}\\
    & = 2^{ dim(span(\mathcal{A}^{'})) + dim(span(\mathcal{B}^{'}))}\\
    & \le 2^{n+1}
  \end{split}
\end{equation}
\begin{lemma}\thlabel{lem:3.1}
 If the elements of a linear code $C \subseteq \mathbb{F}^{n}_{2} $ are arranged as rows of a matrix $M_{C}$ with $n$ columns, then for each column, one of the following holds,
 \begin{enumerate}[(i)]
 \item  All the entries in that column are $0$
 \item  Exactly half the entries in that column are $0$, and the rest are $1$.
 \end{enumerate}

\end{lemma}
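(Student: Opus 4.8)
The plan is to treat each column separately via a linear-algebraic argument. Fix a column index $i\in[n]$ and consider the projection map $\pi_i\colon C\to\mathbb{F}_2$ sending a codeword $c=(c_1,\dots,c_n)$ to its $i$-th coordinate $c_i$. Since vector addition in $\mathbb{F}_2^n$ is coordinatewise, $\pi_i$ is $\mathbb{F}_2$-linear, so its image is a linear subspace of $\mathbb{F}_2$ and hence is either $\{0\}$ or all of $\mathbb{F}_2$.

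If the image is $\{0\}$, then $c_i=0$ for every $c\in C$, so the $i$-th column of $M_C$ is the all-zero column, which is case $(i)$. If instead the image is $\mathbb{F}_2$, pick a codeword $c^\ast\in C$ with $c^\ast_i=1$. The codewords with $i$-th coordinate equal to $0$ form the subspace $\ker\pi_i$, and those with $i$-th coordinate equal to $1$ form the coset $c^\ast+\ker\pi_i$; these two sets partition $C$. The translation map $x\mapsto x+c^\ast$ is a bijection from $\ker\pi_i$ onto $c^\ast+\ker\pi_i$, and it maps $C$ into $C$ because $C$ is a linear code (closed under addition). Hence the two parts have equal size, each equal to $|C|/2$, which is case $(ii)$ (and shows $|C|$ is even in this situation).

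There is no real obstacle here; the only point that needs care is invoking that $C=\mathrm{span}(C)$ is closed under addition, so that the coset translation by $c^\ast$ genuinely produces codewords of $C$ and the ``exactly half'' count is a count within the rows of $M_C$. I would then note that applying this to every column $i\in[n]$ yields the statement.
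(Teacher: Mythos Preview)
Your proof is correct and essentially identical to the paper's: both fix a column that is not identically zero, pick a codeword $a$ (your $c^\ast$) with a $1$ in that position, and use that the bijection $x\mapsto a+x$ of $C$ flips that coordinate to conclude the $0$'s and $1$'s are equinumerous. You phrase it via the linear projection $\pi_i$, its kernel, and the nontrivial coset, while the paper speaks directly of the set $S=\{a+x:x\in C\}=C$ and compares column counts in $M_S$ and $M_C$; the underlying argument is the same.
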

\begin{proof}
     As $C$ is a linear code, if we pick any $a \in C$, and consider the set $S=\{a+x | x\in C\}$ where $a+x$ is the vector addition in $\mathbb{F}^{n}_{2}$, then by the definition of a linear code $S=C$. Let $M_{S}$ be a matrix whose rows are the vectors of $S$, taken in any order. $M_{S}$ and $M_{C}$ have the same set of rows (only their order may differ). \par
   Let $j\in [n]$. Column $j$ in $M_{C}$ and $M_{S}$ have the same number of $1$'s( and $0$'s). Suppose (i) does not hold for column $j$ in $M_{C}$. Then, some row, say $a$, in $M_{C}$ has its $j^{th}$ entry as 1. Let $S$, and thereby $M_{S}$, be defined according to this vector $a$. From the definition of $S$, it is clear that the number of $1$'s in the $j^{th}$ column of $M_{S}$ is equal to the number of $1$'s in the $j^{th}$ column of $M_{C}$. Since adding $a$ to any $\{0,1\}$ vector flips the $j^{th}$ coordinate of $v$, we conclude that $(ii)$ holds for $M_{c}$. 
\end{proof}   
\begin{corollary}\thlabel{cor:3.2}
 $|span(\mathcal{A}^{'})|\geq 2 |\mathcal{A}^{'}|$
\end{corollary}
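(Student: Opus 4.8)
The plan is to apply \thref{lem:3.1} to the linear code $C = span(\mathcal{A}')$, concentrating on the leftmost coordinate — the one prepended in the Construction. First I would note that every vector of $\mathcal{A}'$ was obtained by appending a $1$ to the left, so all of $\mathcal{A}'$ lies in the set of vectors of $\mathbb{F}_2^{n+1}$ whose first coordinate equals $1$. Since $\mathcal{A}' \subseteq span(\mathcal{A}')$, this already tells us that the first column of the matrix $M_C$ whose rows are the elements of $span(\mathcal{A}')$ contains at least one $1$ (indeed, one coming from every row that is a member of $\mathcal{A}'$).

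Next, because $C$ is a linear code, \thref{lem:3.1} applies to $M_C$: for the first column, either all entries are $0$ or exactly half are $0$ and half are $1$. We have just ruled out the first alternative, so exactly half of the $|span(\mathcal{A}')|$ rows of $M_C$ have a $1$ in the first coordinate.

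Finally, since $\mathcal{A}' \subseteq span(\mathcal{A}')$ and every element of $\mathcal{A}'$ has first coordinate $1$, the set $\mathcal{A}'$ is contained among those rows counted in the second alternative. Hence $|\mathcal{A}'| \le \tfrac{1}{2}|span(\mathcal{A}')|$, which rearranges to $|span(\mathcal{A}')| \ge 2|\mathcal{A}'|$, as desired. I do not expect any genuine obstacle here; the one subtlety worth flagging is that the prepended coordinate is essential — it is exactly what guarantees the relevant column of $M_C$ is nonzero (without it, $\mathcal{A}$ could contain the zero vector or lie inside a coordinate hyperplane, and the factor of $2$ would be lost).
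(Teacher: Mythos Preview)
Your proposal is correct and follows essentially the same argument as the paper: both apply \thref{lem:3.1} to the leftmost column of the matrix for $span(\mathcal{A}')$, rule out the all-zero case because every vector of $\mathcal{A}'$ has a $1$ there by construction, and conclude that $\mathcal{A}'$ sits inside the half of $span(\mathcal{A}')$ with first coordinate $1$. Your write-up is a bit more explicit about why alternative (i) of the lemma is excluded, but the idea is identical.
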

\begin{proof}
     The leftmost column of $\mathcal{M}_{\mathcal{A}^{'}}$ does not contain any $0$. As $span(\mathcal{A}^{'})$ is a linear code and $\mathcal{A}^{'}\subseteq span(\mathcal{A}^{'})$, by condition (ii) of \thref{lem:3.1} above, $span(\mathcal{A}^{'})$ must have at least $|\mathcal{A}^{'}|$ more elements having their leftmost entry as $0$.
\end{proof}
Now we prove the main result of this section which is \thref{thm:1.1}.  \vspace{0.1in}  \\
\textbf{Statement of \thref{thm:1.1}:} $\mathcal{M}_{\frac{c}{d}}(n) = 2^n$
\begin{proof}
     $\mathcal{A}=2^{[n]}$, $\mathcal{B}=\{\emptyset\}$ is a trivial example of a $\frac{c}{d}$ cross-intersecting pair of families having $|\mathcal{A}||\mathcal{B}|=2^n$. Thus, $\mathcal{M}_{\frac{c}{d}}(n) \geq 2^n$.
     The proof of the upper bound for $\mathcal{M}_{\frac{c}{d}}(n)$ follows from Inequality \eqref{eq:1} and \thref{cor:3.2}. Let $(\mathcal{A},\mathcal{B})$ be a $\frac{c}{d}$ cross-intersecting pair of families of subsets of $[n]$. Let $\mathcal{A}^{'}$, $\mathcal{B}^{'}$ be constructed from $\mathcal{A}$, $\mathcal{B}$, respectively, as explained in the beginning of this section. Note that $|\mathcal{A}^{'}| = |\mathcal{A}|$ and $|\mathcal{B}^{'}| = |\mathcal{B}|$ by construction.
 \begin{align*}
   2^{n+1} & \ge |\text{span}(\mathcal{A}^{'})|\cdot|\text{span}(\mathcal{B}^{'})| && \text{\hfill [from \eqref{eq:1}]}\\
	   & \ge 2\cdot|\mathcal{A}^{'}|\cdot|\text{span}(\mathcal{B}^{'})| && \text{\hfill [from \thref{cor:3.2}]}\\
	   & \ge 2\cdot|\mathcal{A}^{'}|\cdot|\mathcal{B}^{'}| \\
	   & = 2\cdot|\mathcal{A}|\cdot|\mathcal{B}| && \text{[by construction]}\\
 \end{align*}

\end{proof}
\end{section}

\begin{section}{Characterization of maximal pairs when $\frac{c}{d}=\frac{1}{2}$}

\begin{definition}
 Cross bisecting pair of families: A pair of families of subsets of $[n]$ is called a \emph{cross-bisecting pair} if it is a  $\frac{1}{2}$ cross-intersecting pair. $(\mathcal{A},\mathcal{B})$ is called a \emph{maximal} cross bisecting or simply a \emph{maximal pair}, if it is a cross bisecting pair and $|\mathcal{A}||\mathcal{B}|=2^n$.
\end{definition}
For example, $\mathcal{A}=2^{[n]}$ and $\mathcal{B}=\{\emptyset\}$ is a trivial maximal pair. In this section, we characterize all maximal pairs.
Let $(\mathcal{A},\mathcal{B})$ be a cross bisecting pair and let $(\mathcal{A}^{'},\mathcal{B}^{'})$ be the associated pair constructed by appending bits as defined in the previous section.
\begin{definition}
 Let $f_{\mathcal{A}} : \mathcal{A} \rightarrow \mathcal{A}^{'}$ be a bijective mapping that maps every vector in $\mathcal{A}$ to its corresponding vector in $\mathcal{A}^{'}$, and let $g_{\mathcal{A}} : \mathcal{A}^{'} \rightarrow \mathcal{A}$ be its inverse. Likewise, define functions $f_{\mathcal{B}}$ and $g_{\mathcal{B}}$ between $\mathcal{B}$ and $\mathcal{B}^{'}$. For any set $V \subseteq \mathcal{A}$, we shall use, $f_{\mathcal{A}}(V)$ to denote $\{f_{\mathcal{A}}(A) |\ A \in V\}$ and for any $V\subseteq \mathcal{A}^{'}$, we use $g_{\mathcal{A}}(V)$ to denote  $\{g_{\mathcal{A}}(A) |\ A \in V\}$.
 Similarly, for any  $V \subseteq \mathcal{B}$, we use, $f_{\mathcal{B}}(V)$ to denote $\{f_{\mathcal{B}}(B) |\ B \in V\}$ and for any $V\subseteq \mathcal{B}^{'}$, $g_{\mathcal{B}}(V)$ to denote $\{g_{\mathcal{B}}(B) |\ B \in V\}$ 
 \end{definition}

\begin{observation}\thlabel{obv}
$f_{\mathcal{B}}(\mathcal{B}_{1})=\mathcal{B}_{1}^{'}$ and $f_{\mathcal{B}}(\mathcal{B}_{2})=\mathcal{B}_{2}^{'}$. 
Similarly, $g_{\mathcal{B}}(\mathcal{B}_{1}^{'})=\mathcal{B}_{1}$ and $g_{\mathcal{B}}(\mathcal{B}_{2}^{'})=\mathcal{B}_{2}$\\
\end{observation}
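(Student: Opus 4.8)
The plan is to simply unwind the definitions: the statement is a bookkeeping consequence of the construction of $\mathcal{B}'$ together with that of the bijection $f_{\mathcal{B}}$, and there is no real obstacle beyond checking that $f_{\mathcal{B}}$ is well defined as a bijection. First I would recall that, as already noted, $\{\mathcal{B}_1,\mathcal{B}_2\}$ is a partition of $\mathcal{B}$: since every $B\in\mathcal{B}$ has $|B|$ divisible by $d$, exactly one of $|B|\equiv 0\pmod{2d}$ or $|B|\equiv d\pmod{2d}$ holds. Consequently, in the construction of $\mathcal{B}'$ each vector $\chi(B)$ with $B\in\mathcal{B}$ receives a single, well-determined appended bit: a $0$ precisely when $B\in\mathcal{B}_1$ and a $1$ precisely when $B\in\mathcal{B}_2$. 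In particular $\mathcal{B}'=\mathcal{B}_1'\cup\mathcal{B}_2'$ is a disjoint union, and the map $f_{\mathcal{B}}$ sending $\chi(B)$ to the corresponding appended vector is injective (two distinct members of $\mathcal{B}$ either lie in the same part, where injectivity is clear since the last $n$ coordinates already differ, or in different parts, where the appended bit differs) and onto $\mathcal{B}'$; hence it is genuinely the bijection asserted in the definition.

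Next I would establish the two identities for $f_{\mathcal{B}}$ by mutual inclusion. For the forward direction: if $B\in\mathcal{B}_1$ then $f_{\mathcal{B}}(\chi(B))$ is $\chi(B)$ with a $0$ prepended, which is by definition an element of $\mathcal{B}_1'$, so $f_{\mathcal{B}}(\mathcal{B}_1)\subseteq\mathcal{B}_1'$, and symmetrically $f_{\mathcal{B}}(\mathcal{B}_2)\subseteq\mathcal{B}_2'$. For the reverse direction: every vector of $\mathcal{B}_1'$ arises, by construction, as $\chi(B)$ with a $0$ prepended for some $B\in\mathcal{B}_1$, i.e.\ equals $f_{\mathcal{B}}(\chi(B))$, whence $\mathcal{B}_1'\subseteq f_{\mathcal{B}}(\mathcal{B}_1)$; likewise $\mathcal{B}_2'\subseteq f_{\mathcal{B}}(\mathcal{B}_2)$. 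Combining the two inclusions gives $f_{\mathcal{B}}(\mathcal{B}_1)=\mathcal{B}_1'$ and $f_{\mathcal{B}}(\mathcal{B}_2)=\mathcal{B}_2'$.

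Finally, since $g_{\mathcal{B}}$ is the inverse of the bijection $f_{\mathcal{B}}$, applying $g_{\mathcal{B}}$ to the two identities just proved yields $g_{\mathcal{B}}(\mathcal{B}_1')=\mathcal{B}_1$ and $g_{\mathcal{B}}(\mathcal{B}_2')=\mathcal{B}_2$, which completes the proof. The only place that needs any care — and the nearest thing to an obstacle — is the well-definedness of $f_{\mathcal{B}}$ as a bijection, which is precisely why one first uses that $\{\mathcal{B}_1,\mathcal{B}_2\}$ genuinely partitions $\mathcal{B}$; with that in hand everything else is immediate from the construction.
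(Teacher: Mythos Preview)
Your proposal is correct and is precisely the definitional unwinding the paper has in mind; the paper in fact states this as an observation without proof, since it follows immediately from the construction of $\mathcal{B}'$ and the definition of $f_{\mathcal{B}}$ and its inverse.
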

Suppose $(\mathcal{A},\mathcal{B})$ is a maximal pair. Then from the proof of \thref{thm:1.1}, we must have :

\begin{align}
 |\text{span}(\mathcal{A}^{'})| & = 2|\mathcal{A}^{'}|\\
 |\text{span}(\mathcal{B}^{'})| & = |\mathcal{B}^{'}| \label{eq:3}\\ 
 \text{dim}(span(\mathcal{A}^{'})) + \text{dim}(span(\mathcal{B}^{'})) & = n+1
\end{align}
\begin{proposition}\thlabel{lem:4.3}
$\mathcal{B} = \text{span}(\mathcal{B})$. Further, $f_{\mathcal{B}}$ is a linear map.
\end{proposition}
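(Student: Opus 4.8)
The plan is to exploit the one consequence of maximality that we have not yet used for $\mathcal{B}$, namely Equation~\eqref{eq:3}: $|\text{span}(\mathcal{B}')| = |\mathcal{B}'|$, which says precisely that $\mathcal{B}'$ is already a linear code, and then to transport this linearity back across the bit-appending bijection $f_{\mathcal{B}}$. The structural fact I will lean on is that the bit prepended to a vector $b=\chi(B)$ is a function of $b$ alone — it is $0$ when $|B|\equiv 0~(\mathrm{mod}~2d)$ and $1$ when $|B|\equiv d~(\mathrm{mod}~2d)$, with $d=2$ here — so the vectors of $\mathcal{B}_1'$ and of $\mathcal{B}_2'$ have disjoint projections onto their last $n$ coordinates; equivalently, the map $g_{\mathcal{B}}:\mathcal{B}'\to\mathcal{B}$ that simply deletes the leftmost coordinate is a well-defined bijection. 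This is already built into \thref{obv} and the definition of $f_{\mathcal{B}},g_{\mathcal{B}}$, and it is the only point where a little care is needed.

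First I would show $\mathcal{B}$ is closed under addition in $\mathbb{F}_2^n$. Given $b_1,b_2\in\mathcal{B}$, the vectors $f_{\mathcal{B}}(b_1),f_{\mathcal{B}}(b_2)$ lie in $\mathcal{B}'$; since $\mathcal{B}'=\text{span}(\mathcal{B}')$ is a linear code, $f_{\mathcal{B}}(b_1)+f_{\mathcal{B}}(b_2)\in\mathcal{B}'$, and deleting its leftmost bit (i.e.\ applying $g_{\mathcal{B}}$) gives $b_1+b_2\in g_{\mathcal{B}}(\mathcal{B}')=\mathcal{B}$. As $\mathcal{B}\neq\emptyset$ (because $|\mathcal{A}||\mathcal{B}|=2^n>0$) and we work over $\mathbb{F}_2$, this is enough: $0=b_1+b_1\in\mathcal{B}$, scalar multiplication is trivial, and every vector of $\text{span}(\mathcal{B})$ is a finite sum of members of $\mathcal{B}$, hence lies in $\mathcal{B}$. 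Therefore $\mathcal{B}=\text{span}(\mathcal{B})$.

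For the second assertion, since everything is over $\mathbb{F}_2$ it suffices to check that $f_{\mathcal{B}}$ is additive. Fix $b_1,b_2\in\mathcal{B}$. Then $f_{\mathcal{B}}(b_1)+f_{\mathcal{B}}(b_2)\in\mathcal{B}'$ (as $\mathcal{B}'$ is a linear code) and $f_{\mathcal{B}}(b_1+b_2)\in\mathcal{B}'$ (as $b_1+b_2\in\mathcal{B}$ by the previous step), and both of these vectors have the same last $n$ coordinates, namely $b_1+b_2$. Applying $g_{\mathcal{B}}$ to each yields $b_1+b_2$ in both cases, so the injectivity of $g_{\mathcal{B}}$ forces $f_{\mathcal{B}}(b_1)+f_{\mathcal{B}}(b_2)=f_{\mathcal{B}}(b_1+b_2)$. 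Hence $f_{\mathcal{B}}$ is additive, and therefore $\mathbb{F}_2$-linear. I expect the only genuine (and minor) obstacle to be justifying that $g_{\mathcal{B}}$ is well defined and injective — i.e.\ that no vector of $\mathbb{F}_2^n$ appears in $\mathcal{B}'$ with two different leftmost bits — which follows immediately from the fact that $\{\mathcal{B}_1,\mathcal{B}_2\}$ partitions $\mathcal{B}$ and that the prepended bit is determined by the block containing $b$; the rest of the argument is a formal consequence of $\text{span}(\mathcal{B}')=\mathcal{B}'$ and of the observation that over $\mathbb{F}_2$ a nonempty set closed under addition is a subspace.
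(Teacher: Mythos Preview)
Your proof is correct and follows essentially the same route as the paper's: use Equation~\eqref{eq:3} to conclude $\mathcal{B}'=\text{span}(\mathcal{B}')$, push $b_1,b_2$ across $f_{\mathcal{B}}$, add in $\mathcal{B}'$, and strip the leftmost bit via $g_{\mathcal{B}}$ to get $b_1+b_2\in\mathcal{B}$; then identify $f_{\mathcal{B}}(b_1)+f_{\mathcal{B}}(b_2)$ with $f_{\mathcal{B}}(b_1+b_2)$ by observing both lie in $\mathcal{B}'$ and have the same last $n$ coordinates. Your explicit remark that the injectivity of $g_{\mathcal{B}}$ is the only point requiring care is a nice clarification of something the paper uses implicitly.
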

\begin{proof}
     This follows from equation \eqref{eq:3}. Let $x_1, x_2 \in \mathcal{B}$. We show that $x_3 = x_1 + x_2 \in \mathcal{B}$. This would imply $\mathcal{B}$ is closed under addition in $\mathbb{F}^{n}_2$ over $\mathbb{F}_2$, and hence $\mathcal{B} = \text{span}(\mathcal{B})$.\par
     Let $x^{'}_1 = f_{\mathcal{B}}(x_1)$ and $x^{'}_2 = f_{\mathcal{B}}(x_2)$. From Equation \eqref{eq:3}, we have, $w = x^{'}_1 + x^{'}_2 \in \mathcal{B}^{'}$. Since $w$ and $x_3$ agree on each of the rightmost $n$ bits of $x_3$, we have $g_{\mathcal{B}}(w)=x_3$. Since $w\in \mathcal{B}^{'}$, from the definition of the function $g_{\mathcal{B}}$ we have $x_3=g_{\mathcal{B}}(w)\in \mathcal{B}$. Further, observe that $f_{\mathcal{B}}(x_1) + f_{\mathcal{B}}(x_2)=$ $w=f_{\mathcal{B}}(x_3)=f_{\mathcal{B}}(x_1+x_2)$ and hence $f_{\mathcal{B}}$ is a linear map. 
\end{proof}
That $\mathcal{B}$ is a linear code from \thref{lem:4.3} implies closure of the family of subsets $\mathcal{B}$ under symmetric difference. In fact, we have the following stronger result.
\begin{proposition}\thlabel{lem:4.4}
Let vectors $b_1,b_2 \in \mathcal{B}$. Then, $b_1 + b_2 \in \mathcal{B}_1$ if and only if either $b_1$,$b_2$ $\in \mathcal{B}_1$, or $b_1$,$b_2$ $\in \mathcal{B}_2$. Otherwise, $b_1 + b_2 \in \mathcal{B}_2$.
\end{proposition}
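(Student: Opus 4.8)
The plan is to exploit the linear structure already established in \thref{lem:4.3} together with the bookkeeping performed by the appended bit. Recall that for a maximal pair we have $\mathcal{B}' = \text{span}(\mathcal{B}')$ by \eqref{eq:3}, that $f_{\mathcal{B}}$ is a linear bijection from $\mathcal{B}$ onto $\mathcal{B}'$ by \thref{lem:4.3}, and that $f_{\mathcal{B}}$ carries $\mathcal{B}_1$ onto $\mathcal{B}_1'$ and $\mathcal{B}_2$ onto $\mathcal{B}_2'$ by \thref{obv}. The point to record first is that, by the way $\mathcal{B}'$ was constructed, $\mathcal{B}_1'$ is precisely the set of vectors of $\mathcal{B}'$ whose leftmost (appended) coordinate equals $0$ and $\mathcal{B}_2'$ is precisely the set of those whose leftmost coordinate equals $1$; this uses that for $\frac{c}{d} = \frac{1}{2}$ the numerator $c = 1$ is odd, so the appended bit genuinely records membership in $\mathcal{B}_2$.

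First I would put $b_1' = f_{\mathcal{B}}(b_1)$ and $b_2' = f_{\mathcal{B}}(b_2)$; since $f_{\mathcal{B}}$ is linear, $f_{\mathcal{B}}(b_1 + b_2) = b_1' + b_2'$, which lies in $\mathcal{B}'$ because $\mathcal{B}'$ is a linear code (this also re-confirms $b_1 + b_2 \in \mathcal{B}$). Next I would read off the leftmost coordinate of $b_1' + b_2'$: it is the $\mathbb{F}_2$-sum of the leftmost coordinates of $b_1'$ and $b_2'$. If $b_1, b_2$ are both in $\mathcal{B}_1$ or both in $\mathcal{B}_2$, those two bits coincide and their sum is $0$, so $b_1' + b_2' \in \mathcal{B}_1'$ and hence $b_1 + b_2 = g_{\mathcal{B}}(b_1' + b_2') \in \mathcal{B}_1$; if exactly one of $b_1, b_2$ lies in $\mathcal{B}_2$, the two bits differ, their sum is $1$, and the same argument gives $b_1 + b_2 \in \mathcal{B}_2$. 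Because $\{\mathcal{B}_1, \mathcal{B}_2\}$ partitions $\mathcal{B}$, these three cases are exhaustive and disjoint, which delivers both the claimed equivalence and the ``otherwise'' clause.

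I do not expect a genuinely hard step: the only delicate point is the bridge between the two descriptions of $\mathcal{B}_1', \mathcal{B}_2'$ --- as the appended-$0$ and appended-$1$ images on the one hand, and as the weight classes $|B| \equiv 0$ and $|B| \equiv 2 \pmod 4$ on the other --- which is exactly where the oddness of $c$ is used. Should one prefer a proof that never mentions the appended bit, one can instead fix any $a \in \mathcal{A}$ (the family is nonempty since $|\mathcal{A}||\mathcal{B}| = 2^n > 0$), expand $|A \cap (B_1 + B_2)| = |A \cap B_1| + |A \cap B_2| - 2|A \cap B_1 \cap B_2|$, apply the $\frac{1}{2}$-intersection identities together with $B_1 + B_2 \in \mathcal{B}$ to conclude that $|B_1 \cap B_2|$ is even, and then read off the weight of $B_1 + B_2$ modulo $4$ from $|B_1 + B_2| = |B_1| + |B_2| - 2|B_1 \cap B_2| \equiv |B_1| + |B_2| \pmod 4$; but the linear-algebraic route above is shorter and stays within the setup already in place.
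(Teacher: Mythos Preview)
Your proof is correct and follows essentially the same route as the paper: set $b_i' = f_{\mathcal{B}}(b_i)$, use the linearity of $f_{\mathcal{B}}$ from \thref{lem:4.3} so that $f_{\mathcal{B}}(b_1+b_2)=b_1'+b_2'$, and read off membership in $\mathcal{B}_1$ versus $\mathcal{B}_2$ from the leftmost (appended) bit via \thref{obv}. The paper organises the argument as a two-way implication with the ``otherwise'' clause deferred to \thref{lem:4.3}, whereas you treat all three cases directly; your aside about the oddness of $c$ is not actually needed at this step (the identification $\mathcal{B}_i' = f_{\mathcal{B}}(\mathcal{B}_i)$ is by construction), but it does no harm.
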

\begin{proof}
      We prove the 2-way implication, and rest of the proposition follows from \thref{lem:4.3}. Let $b^{'}_1 = f_{\mathcal{B}}(b_1), b^{'}_2 = f_{\mathcal{B}}(b_2)$.
 \begin{itemize}
  \item $b_1 + b_2 \in \mathcal{B}_1 \Rightarrow b_1 \text{ and } b_2$ are both from $\mathcal{B}_1$, or both from $\mathcal{B}_2$ \\
   Since $f_{\mathcal{B}}$ is a linear map, we have  $ (b_1 + b_2 \in \mathcal{B}_1) \Rightarrow (f_{\mathcal{B}}(b_1+b_2)=f_{\mathcal{B}}(b_1)+f_{\mathcal{B}}(b_2)=b^{'}_{1}+b^{'}_{2} \in \mathcal{B}^{'}_1)$. So, the leftmost bit of $b^{'}_1 + b^{'}_2$ is $0$. This means that the leftmost bit must be the same in $b^{'}_1$ and $b^{'}_2$, which directly implies that either $b_1^{'}$,$b_2^{'}$ $\in \mathcal{B}_1^{'}$, or $b_1^{'}$,$b_2^{'}$ $\in \mathcal{B}_2^{'}$. 
  
  \item Either $b_1$,$b_2$ $\in \mathcal{B}_1$, or $b_1$,$b_2$ $\in \mathcal{B}_2$ $\Rightarrow b_1 + b_2 \in \mathcal{B}_1$\\
   Since $b_1^{'}$ and $b_2^{'}$ agree upon the leftmost bit,  $b^{'}_1 +
   b^{'}_2$ has a $0$ in its leftmost bit. So, $b^{'}_1 + b^{'}_2 \in
   \mathcal{B}_{1}^{'}$. From the \thref{obv} above, we have $b_1 + b_2
   \in \mathcal{B}_1$.  
 \end{itemize}
\end{proof}
\begin{proposition}\thlabel{lem:4.5}
 $\mathcal{B}$ is a self-orthogonal code.
\end{proposition}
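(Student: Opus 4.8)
The plan is to deduce self-orthogonality from the linear structure already established for $\mathcal{B}$ together with a one-line inclusion–exclusion count of set sizes modulo $4$. First I would record the concrete form of the partition in the case $\frac{c}{d}=\frac12$ (so $d=2$, $2d=4$): every $B\in\mathcal{B}$ has $|B|$ even, and $\mathcal{B}_1=\{B\in\mathcal{B}:|B|\equiv 0\ (\mathrm{mod}\ 4)\}$, $\mathcal{B}_2=\{B\in\mathcal{B}:|B|\equiv 2\ (\mathrm{mod}\ 4)\}$; equivalently, for $b=\chi(B)\in\mathcal{B}$ we have $\frac{|B|}{2}\equiv 0\ (\mathrm{mod}\ 2)$ exactly when $b\in\mathcal{B}_1$ and $\frac{|B|}{2}\equiv 1\ (\mathrm{mod}\ 2)$ exactly when $b\in\mathcal{B}_2$. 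Define $\phi:\mathcal{B}\to\mathbb{F}_2$ by $\phi(b)=0$ for $b\in\mathcal{B}_1$ and $\phi(b)=1$ for $b\in\mathcal{B}_2$ (this is just the leftmost bit of $f_{\mathcal{B}}(b)$). By \thref{lem:4.3} we have $\mathcal{B}=\mathrm{span}(\mathcal{B})$, so $b_1+b_2\in\mathcal{B}$ whenever $b_1,b_2\in\mathcal{B}$, and \thref{lem:4.4} states precisely that $\phi(b_1+b_2)=\phi(b_1)+\phi(b_2)$; that is, $\phi$ is $\mathbb{F}_2$-linear.

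Next, for $b_1=\chi(B_1)$ and $b_2=\chi(B_2)$ in $\mathcal{B}$, I would apply $|B_1\triangle B_2|=|B_1|+|B_2|-2|B_1\cap B_2|$, where $B_1\triangle B_2$ is the set whose characteristic vector is $b_1+b_2$. Dividing by $2$ gives
\[
\frac{|B_1\triangle B_2|}{2}=\frac{|B_1|}{2}+\frac{|B_2|}{2}-|B_1\cap B_2|,
\]
and reducing modulo $2$ identifies the left-hand side with $\phi(b_1+b_2)$ and the first two summands on the right with $\phi(b_1)$ and $\phi(b_2)$. Using the linearity $\phi(b_1+b_2)=\phi(b_1)+\phi(b_2)$ from the previous paragraph, the term $|B_1\cap B_2|$ must be $\equiv 0\ (\mathrm{mod}\ 2)$. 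Since $\langle b_1,b_2\rangle\equiv|B_1\cap B_2|\ (\mathrm{mod}\ 2)$, this yields $\langle b_1,b_2\rangle=0$ for all $b_1,b_2\in\mathcal{B}$, i.e. $\mathcal{B}\subseteq\mathcal{B}^{\perp}$, which is the assertion.

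Equivalently, one could split into the three cases $b_1,b_2\in\mathcal{B}_1$, then $b_1,b_2\in\mathcal{B}_2$, then $b_1\in\mathcal{B}_1$ and $b_2\in\mathcal{B}_2$, locating $b_1+b_2$ in $\mathcal{B}_1$ or $\mathcal{B}_2$ via \thref{lem:4.4} and checking in each case that $|B_1\cap B_2|$ is even directly from the size congruences modulo $4$; the computation above handles all three uniformly. I do not expect a genuine obstacle here: the only thing requiring care is the bookkeeping between the two moduli — the partition $\{\mathcal{B}_1,\mathcal{B}_2\}$ lives modulo $2d=4$ while orthogonality is a statement modulo $2$, and the division-by-$2$ step is exactly what bridges them. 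The sole nontrivial input is \thref{lem:4.4}, which in turn depends on maximality through \thref{lem:4.3}.
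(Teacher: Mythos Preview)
Your proof is correct and is essentially the paper's own argument: both use $|B_1\triangle B_2|=|B_1|+|B_2|-2|B_1\cap B_2|$ together with \thref{lem:4.3} and \thref{lem:4.4} to force $|B_1\cap B_2|$ even. The only cosmetic difference is that you package \thref{lem:4.4} as the $\mathbb{F}_2$-linearity of the parity map $\phi$ and work modulo $2$ after halving, whereas the paper works modulo $4$ and splits into the two cases $B_1\triangle B_2\in\mathcal{B}_1$ versus $\mathcal{B}_2$---exactly the case split you mention at the end as an equivalent route.
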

\begin{proof}
  We prove the proposition by showing that $\forall b_1,b_2 \in \mathcal{B}$, $\langle b_1, b_2\rangle=0$. Let $B_1,B_2$ be the sets corresponding to the vectors $b_1,b_2$, respectively. Since we are operating in the field $\mathbb{F}_{2}$, it is enough to show that $|B_{1}\cap B_{2}|$ is even.
 
  Let $b_3=b_{1}+b_{2}$. We observe that $b_3$ is the characteristic vector of $B_{3}=B_{1}\Delta B_{2}$, the symmetric difference of $B_{1}$ and $B_{2}$.
  We have,
  
   \begin{align}
   |B_{3}|=|B_{1}\Delta B_{2}|=|B_{1}|+|B_{2}|-2|B_{1}\cap B_{2}| \label{eq:5}
   \end{align}

   As $\frac{c}{d}=\frac{1}{2}$, $\forall B \in \mathcal{B}_{1}$, we have $|B|\equiv 0~(\mathrm{mod}~4)$. 
   By \thref{lem:4.3}, $B_{1}\Delta B_{2}=B_{3}\in \mathcal{B}$ as $\mathcal{B}$ is a linear code.
   Taking equation \eqref{eq:5} modulo 4, if $B_{3}\in \mathcal{B}_{1}$, then
   \begin{equation*}
       |B_{1}|+|B_{2}|-2|B_{1}\cap B_{2}|\equiv 0 ~(\mathrm{mod}~ 4)
   \end{equation*}
   By \thref{lem:4.4}, both $B_{1}$ and $B_{2}$ are either from $\mathcal{B}_{1}$ or from $\mathcal{B}_{2}$. In both cases, $|B_{1}|+|B_{2}|\equiv 0 ~(\mathrm{mod}~ 4)$
   Therefore, $2|B_{1}\cap B_{2}|\equiv 0 ~(\mathrm{mod}~ 4)$ or
           $|B_{1}\cap B_{2}|\equiv 0 ~(\mathrm{mod}~ 2)$.
           If $B_{3}\in \mathcal{B}_{2}$, then 
           \begin{equation*}
             |B_{1}|+|B_{2}|-2|B_{1}\cap B_{2}| \equiv |B_{3}| \equiv 2 ~(\mathrm{mod}~ 4)
           \end{equation*}
           
 Again by \thref{lem:4.4}, $|B_{1}|+|B_{2}|\equiv 2 ~(\mathrm{mod}~ 4)$.\\
 So, we have $2|B_{1}\cap B_{2}|\equiv 0 ~(\mathrm{mod}~ 4)$ or
           $|B_{1}\cap B_{2}|\equiv 0 ~(\mathrm{mod}~ 2)$.
 Thus in both cases, $|B_{1}\cap B_{2}|$
is even, so $\mathcal{B}$ is a self-othogonal code.   
  \end{proof}
  \begin{lemma}\thlabel{lem:4.6}
   Let $(\mathcal{A},\mathcal{B})$ be a maximal pair, then $|\mathcal{B}|\leq 2^{\lfloor\frac{n}{2}\rfloor}$
  \end{lemma}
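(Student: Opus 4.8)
The plan is to exploit \thref{lem:4.5}: since $(\mathcal{A},\mathcal{B})$ is maximal, $\mathcal{B}$ is a self-orthogonal code in $\mathbb{F}_2^n$. It is a standard fact that a self-orthogonal code $C \subseteq \mathbb{F}_2^n$ satisfies $C \subseteq C^\perp$, hence $\dim(C) \le \dim(C^\perp) = n - \dim(C)$, which gives $\dim(C) \le \lfloor n/2 \rfloor$. Combining this with \thref{lem:4.3}, which tells us $\mathcal{B} = \mathrm{span}(\mathcal{B})$ is itself a linear code, we get $|\mathcal{B}| = 2^{\dim(\mathcal{B})} \le 2^{\lfloor n/2 \rfloor}$.

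More concretely, the key steps in order are: first, invoke \thref{lem:4.3} to conclude $\mathcal{B}$ is a linear code, so $|\mathcal{B}| = 2^{\dim(\mathcal{B})}$; second, invoke \thref{lem:4.5} to conclude $\mathcal{B} \subseteq \mathcal{B}^\perp$; third, recall (from the dimension formula for dual codes, which is standard and can be stated without proof) that $\dim(\mathcal{B}) + \dim(\mathcal{B}^\perp) = n$; fourth, from $\mathcal{B} \subseteq \mathcal{B}^\perp$ deduce $\dim(\mathcal{B}) \le \dim(\mathcal{B}^\perp) = n - \dim(\mathcal{B})$, so $2\dim(\mathcal{B}) \le n$, i.e. $\dim(\mathcal{B}) \le \lfloor n/2 \rfloor$ since the dimension is an integer; fifth, substitute back to get $|\mathcal{B}| = 2^{\dim(\mathcal{B})} \le 2^{\lfloor n/2 \rfloor}$.

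Alternatively, one could avoid citing the dual-dimension formula by arguing directly: if $\dim(\mathcal{B}) = m$, pick a basis $b_1,\dots,b_m$ of $\mathcal{B}$; self-orthogonality says these $m$ vectors, viewed as rows of an $m \times n$ matrix, satisfy $MM^T = 0$ over $\mathbb{F}_2$, and since $M$ has rank $m$ its row space and the row space of $M$ (living in the left null space relations) force $m \le n - m$ by rank-nullity applied to the linear map $x \mapsto Mx$ whose kernel contains the row space of $M$. Either route is routine; there is essentially no obstacle here — the whole lemma is a short consequence of the self-orthogonality established in \thref{lem:4.5} together with the elementary bound on the dimension of a self-orthogonal binary code. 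If anything needs care, it is only the remark that $\mathcal{B}$ being self-orthogonal genuinely requires $\mathcal{B}$ to be closed under addition (so that "$\langle b_1,b_2\rangle = 0$ for all $b_1,b_2$" translates to "$\mathcal{B} \subseteq \mathcal{B}^\perp$" as a statement about the linear code $\mathcal{B}$), which is exactly what \thref{lem:4.3} supplies.
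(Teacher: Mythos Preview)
Your proposal is correct and follows essentially the same route as the paper: invoke \thref{lem:4.3} and \thref{lem:4.5} to see that $\mathcal{B}$ is a self-orthogonal linear code, then use the standard identity $\dim(C)+\dim(C^{\perp})=n$ together with $C\subseteq C^{\perp}$ to conclude $\dim(\mathcal{B})\le \lfloor n/2\rfloor$ and hence $|\mathcal{B}|\le 2^{\lfloor n/2\rfloor}$.
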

  \begin{proof}
    It is a known result (see \cite{vanLint1999}) that for a linear code $C \subseteq \mathbb{F}^{n}_2$ and its dual code $C^{\perp}$,
 \begin{equation}\label{eq:6}
  \text{dim}(C) + \text{dim}(C^{\perp}) = n
 \end{equation}
 
 For any self-orthogonal code $C$, $C \subseteq C^{\perp}$. So,
 \begin{equation*}
  \text{dim}(C) \le \text{dim}(C^{\perp})
 \end{equation*}
  Applying equation \eqref{eq:6} in this inequality, we get
  \begin{gather*}
   n = \text{dim}(C) + \text{dim}(C^{\perp}) \ge 2 \text{dim}(C)\\
   \text{Therefore, } \text{dim}(C) \le \frac{n}{2}
  \end{gather*}
Since $\mathcal{B}$ is a self-orthogonal code (\thref{lem:4.5}), we get dim($\mathcal{B}$) $\le \frac{n}{2}$. Hence,
\begin{equation*}
 |\mathcal{B}| \le 2^{\lfloor \frac{n}{2} \rfloor}
\end{equation*}
  \end{proof}
  \begin{proposition}\thlabel{lem:4.7}
  If a set $A$ bisects $B_1$, $B_2$ and $B_1 \Delta B_2$, then $A$ also bisects $B_1 \cap B_2$.
  \end{proposition}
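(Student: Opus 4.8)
The plan is to run a short double-counting argument using the distributivity of intersection over symmetric difference. The starting observation is the set identity $A\cap(B_1\Delta B_2)=(A\cap B_1)\Delta(A\cap B_2)$, which holds because an element lies in the left side iff it lies in $A$ and in exactly one of $B_1,B_2$, iff it lies in exactly one of $A\cap B_1$, $A\cap B_2$. Taking cardinalities and using $|X\Delta Y|=|X|+|Y|-2|X\cap Y|$ gives
\begin{equation*}
 |A\cap(B_1\Delta B_2)| = |A\cap B_1|+|A\cap B_2|-2\,|A\cap B_1\cap B_2|.
\end{equation*}

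Next I would write down the analogous (simpler) identity without $A$, namely $|B_1\Delta B_2|=|B_1|+|B_2|-2\,|B_1\cap B_2|$. Now I substitute the three bisecting hypotheses: $|A\cap B_1|=\tfrac12|B_1|$, $|A\cap B_2|=\tfrac12|B_2|$, and $|A\cap(B_1\Delta B_2)|=\tfrac12|B_1\Delta B_2|$. Plugging these into the first displayed equation and replacing $\tfrac12|B_1\Delta B_2|$ by $\tfrac12(|B_1|+|B_2|-2|B_1\cap B_2|)$ using the second identity, the terms $\tfrac12|B_1|$ and $\tfrac12|B_2|$ cancel from both sides, leaving $|B_1\cap B_2| = 2\,|A\cap B_1\cap B_2|$. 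Since $A\cap B_1\cap B_2 = A\cap(B_1\cap B_2)$, this is exactly the statement that $A$ bisects $B_1\cap B_2$.

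There is essentially no obstacle here: the whole argument is two applications of inclusion–exclusion plus one set-theoretic distributive law, and the cancellation is immediate. The only point worth stating carefully is the distributivity identity $A\cap(B_1\Delta B_2)=(A\cap B_1)\Delta(A\cap B_2)$, since everything else follows mechanically once that is in place.
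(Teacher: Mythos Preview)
Your proof is correct and follows essentially the same inclusion--exclusion computation as the paper. The only cosmetic difference is that you reach the identity $|A\cap(B_1\Delta B_2)| = |A\cap B_1|+|A\cap B_2|-2|A\cap B_1\cap B_2|$ in one step via the distributive law $A\cap(B_1\Delta B_2)=(A\cap B_1)\Delta(A\cap B_2)$, whereas the paper derives the same identity by decomposing $B_1\Delta B_2$ into the disjoint union $(B_1\setminus B_2)\cup(B_2\setminus B_1)$ and expanding; after that both arguments substitute the three bisection hypotheses and cancel identically.
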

  \begin{proof}
  \begin{eqnarray*}
  |A \cap (B_1 \vartriangle B_2)| & = & \frac{|B_1 \vartriangle B_2|}{2} \text{ [A bisects $B_1 \Delta B_2$]} \\
  \Rightarrow |A \cap ((B_1 \backslash B_2) \cup (B_2 \backslash B_1))| & = & \frac{|B_1| + |B_2| - 2|B_1 \cap B_2|}{2}\\
  \Rightarrow |A \cap (B_1 \backslash B_2)| + |A \cap (B_2 \backslash B_1)| & = & \frac{|B_1|}{2} + \frac{|B_2|}{2} -|B_1 \cap B_2|
  \end{eqnarray*}
  $$
  \Rightarrow |A \cap B_1| - |A \cap (B_1 \cap B_2)| + |A \cap (B_2)| - |A \cap (B_1 \cap B_2)|  =  \frac{|B_1|}{2} + \frac{|B_2|}{2} -|B_1 \cap B_2|
 $$
\begin{eqnarray*}
  \Rightarrow \frac{|B_1|}{2} + \frac{|B_2|}{2} - 2|A \cap (B_1 \cap B_2)| & = & \frac{|B_1|}{2} + \frac{|B_2|}{2} -|B_1 \cap B_2| \\
  & & \text{ [since $A$ bisects both $B_1$ and $B_2$]} \\
  \Rightarrow 2|A \cap (B_1 \cap B_2)| & = & |B_1 \cap B_2| \\
  \Rightarrow |A \cap (B_1 \cap B_2)| & = & \frac{|B_1 \cap B_2|}{2}
 \end{eqnarray*}
  \end{proof}
  \begin{proposition}\thlabel{cor:4.8}
   $\mathcal{B}$ is closed under intersection.
  \end{proposition}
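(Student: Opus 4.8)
The plan is to combine the linearity of $\mathcal{B}$ (\thref{lem:4.3}) with \thref{lem:4.7} and then close the argument using the extremality of the pair via \thref{thm:1.1}. Fix $B_1,B_2\in\mathcal{B}$; the goal is to show $B_1\cap B_2\in\mathcal{B}$.

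First I would observe that, since $\mathcal{B}=\mathrm{span}(\mathcal{B})$ is a linear code, the symmetric difference $B_1\Delta B_2$ (whose characteristic vector is $\chi(B_1)+\chi(B_2)$) again lies in $\mathcal{B}$. Now take an arbitrary $A\in\mathcal{A}$. Because $(\mathcal{A},\mathcal{B})$ is a cross-bisecting pair, $A$ bisects each of the three members $B_1$, $B_2$, $B_1\Delta B_2$ of $\mathcal{B}$. \thref{lem:4.7} then applies directly and shows that $A$ bisects $B_1\cap B_2$ as well. As $A\in\mathcal{A}$ was arbitrary, the set $B_1\cap B_2$ is bisected by every member of $\mathcal{A}$; equivalently, $(\mathcal{A},\mathcal{B}\cup\{B_1\cap B_2\})$ is again a cross-bisecting pair of families of subsets of $[n]$.

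Finally I would invoke maximality. Suppose, for contradiction, that $B_1\cap B_2\notin\mathcal{B}$. Since $(\mathcal{A},\mathcal{B})$ is a maximal pair we have $|\mathcal{A}||\mathcal{B}|=2^n$, which in particular forces $|\mathcal{A}|\geq 1$. Then the cross-bisecting pair $(\mathcal{A},\mathcal{B}\cup\{B_1\cap B_2\})$ satisfies $|\mathcal{A}|\cdot|\mathcal{B}\cup\{B_1\cap B_2\}| = |\mathcal{A}|\,(|\mathcal{B}|+1) \geq 2^n+1 > 2^n$, contradicting \thref{thm:1.1}, which states $\mathcal{M}_{\frac{1}{2}}(n)=2^n$. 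Hence $B_1\cap B_2\in\mathcal{B}$, so $\mathcal{B}$ is closed under intersection.

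I do not expect a genuine obstacle here, since every ingredient is already available; the only mildly nontrivial idea is to promote the statement ``$B_1\cap B_2$ is bisected by all of $\mathcal{A}$'' to ``$B_1\cap B_2\in\mathcal{B}$'' by appealing to the optimal value $2^n$, rather than attempting to realize $B_1\cap B_2$ directly as an $\mathbb{F}_2$-combination of code words of $\mathcal{B}$ (which in general it is not).
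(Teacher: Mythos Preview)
Your proof is correct and follows essentially the same route as the paper: use \thref{lem:4.3} to get $B_1\Delta B_2\in\mathcal{B}$, apply \thref{lem:4.7} so that every $A\in\mathcal{A}$ bisects $B_1\cap B_2$, and then invoke maximality. The only difference is cosmetic: the paper compresses your final contradiction argument into the single clause ``since $(\mathcal{A},\mathcal{B})$ is a maximal pair,'' whereas you spell out explicitly that adjoining $B_1\cap B_2$ would violate the $2^n$ bound of \thref{thm:1.1}.
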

  \begin{proof}
    Let $B_{1}$,$B_{2}$ $\in \mathcal{B}$. We show that $B_{1}\cap B_{2} \in \mathcal{B}$. By \thref{lem:4.3}, $b_{1}+b_{2} \in \mathcal{B}$ i.e.,  $B_1 \Delta B_2 \in \mathcal{B}$. Let $A$ be any arbitrary member of $\mathcal{A}$.
    Now, $A$ bisects $ B_{1}, B_{2}$ and $B_1 \Delta B_2$ as $(\mathcal{A},\mathcal{B})$ is a cross bisecting pair. By \thref{lem:4.7}, $A$ bisects $B_1 \cap B_2$. Since $(\mathcal{A},\mathcal{B})$ is a maximal pair, we conclude that $B_1 \cap B_2 \in \mathcal{B} $. 
  \end{proof}
  Now, we prove the main result of this section,\thref{thm:1.2}, the characterization of maximal pairs.\\
  
 \textbf{Statement of \thref{thm:1.2}: }
 \emph{Let $(\mathcal{A},\mathcal{B})$ be a $\frac{1}{2}$-cross intersecting pair of families of subsets of $[n]$ with $|\mathcal{A}||\mathcal{B}|=2^n$. Then $(\mathcal{A},\mathcal{B})$ is one of the following $\lfloor\frac{n}{2}\rfloor +1$ pairs of families $(\mathcal{A}_{k},\mathcal{B}_{k})$, $0 \leq k \leq \lfloor \frac{n}{2} \rfloor$, up to isomorphism.
 \begin{center}
     $\mathcal{A}_{0}=2^{[n]}$ and $\mathcal{B}_{0}=\{\emptyset\}$
 \end{center}
 \begin{center}
     $\mathcal{A}_{k}=\{ A \in 2^{[n]} : |A \cap \{ 2i-1, 2i\}|= 1 ~~\forall i, 1\leq i\leq k\}$
 \end{center}
 \begin{center}
     $\mathcal{B}_{k}=\{ B \in 2^{[n]} : |B \cap \{ 2i-1, 2i \}| \in \{ 0, 2\}~~ \forall i, 1\leq i\leq k $ and $\forall j > 2k$, $j \notin B \}$, 
 \end{center}
 where $1 \leq k \leq \lfloor \frac{n}{2} \rfloor$. 
}
  
  \par
  By isomorphism, it is meant that for any maximal pair $(\mathcal{A},\mathcal{B})$, $\exists$ a bijective mapping
      $f:[n]\rightarrow [n]$
  such that if every $A \in \mathcal{A}$ is replaced by $A_{f}=\{ f(i)|i \in A\}$ and every $B \in \mathcal{B}$ is replaced by $B_{f}=\{ f(i)| i \in B\}$ then the families $(\mathcal{A}_{f},\mathcal{B}_{f})$, where $\mathcal{A}_{f}=\{A_{f}|A\in \mathcal{A}\}$ and $\mathcal{B}_{f}=\{B_{f}|B\in \mathcal{B}\}$, is a maximal pair which is one of $(\mathcal{A}_{k},\mathcal{B}_{k})$ , $0 \leq k \leq \lfloor \frac{n}{2} \rfloor  $.\par
  \begin{proof}
    Consider a maximal pair $(\mathcal{A}, \mathcal{B})$ where $\mathcal{B} \neq \{ \emptyset \}$. We write the elements of $\mathcal{B}$ as rows of a $0-1$ matrix $M_0$. Suppose $n_{0}$ columns have only $0$ entries in all the rows($n_{0}$ may be $0$). As the characterization is up to isomorphism, we may assume that these are the rightmost  $n_{0}$ columns of the matrix $M_0$.
    In each of the remaining  $n-n_{0}$ columns, from \thref{lem:3.1}, there are exactly $\frac{|\mathcal{B}|}{2}$ $1$'s and $\frac{|\mathcal{B}|}{2}$ $0$'s as $\mathcal{B}$ is a linear code. (by \thref{lem:4.3})\\
    Define 
    \begin{equation*} 
  B_1 = \bigcap_{\substack{1 \in B,\\ B \in \mathcal{B}}} B
  \end{equation*}
  We write the $\frac{|\mathcal{B}|}{2}$ rows containing 1 in the leftmost column of $M_0$ as the top $\frac{|\mathcal{B}|}{2}$ rows to obtain a new matrix $M_1$ from $M_0$. And $B_{1}$ is one of these rows according to \thref{cor:4.8}.
  Moreover, as all intersections are of even cardinality (\thref{lem:4.5}), $|B_{1}|$ is even. \\
  Let $|B_{1}|=2i_{1}$, $i_{1}\geq 1$.
  So, there are $2i_{1}-1$ elements in $B_{1}$ other than the element 1. Due to isomorphism, we may assume them to be $2,3,\ldots,2i_{1}$.\\
  If $2i_{1}+1\leq n-n_{0}$, then define the set $B_2$ as:
      \begin{equation*} 
  B_2 = \bigcap_{\substack{ 2i_{1}+1 \in B,\\ B \in \mathcal{B}}} B
  \end{equation*}
  \begin{claim} \thlabel{lem:4.ten}
    $1 \notin B_2$
  \end{claim}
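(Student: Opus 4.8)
The plan is to argue by contradiction: suppose $1\in B_2$ and derive a violation of the minimality that is built into the definition of $B_2$.

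The two structural facts I would invoke are that $\mathcal{B}$ is a linear code ($\thref{lem:4.3}$) and that $\mathcal{B}$ is closed under intersection ($\thref{cor:4.8}$). First I would observe that these make $B_1$ and $B_2$ honest members of $\mathcal{B}$: $B_1$ is the intersection of the subfamily $\{B\in\mathcal{B}:1\in B\}$, which is nonempty because column $1$ of $M_0$ is one of the $n-n_0$ columns that are not identically zero; similarly $B_2$ is the intersection of $\{B\in\mathcal{B}:2i_1+1\in B\}$, which is nonempty precisely because the standing hypothesis $2i_1+1\le n-n_0$ says that column $2i_1+1$ is not identically zero. By construction $B_1$ is then the unique $\subseteq$-minimal member of $\mathcal{B}$ containing the coordinate $1$, and $B_2$ is the unique $\subseteq$-minimal member of $\mathcal{B}$ containing the coordinate $2i_1+1$.

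Then I would run the contradiction. Assume $1\in B_2$. Since $B_2$ is one of the sets whose intersection defines $B_1$, we get $B_1\subseteq B_2$; and the inclusion is proper because $2i_1+1\in B_2$ while $2i_1+1\notin B_1=\{1,2,\dots,2i_1\}$. As $\mathcal{B}$ is linear and $B_1\subseteq B_2$, the set $B_2\setminus B_1=B_1\,\Delta\,B_2$ again lies in $\mathcal{B}$. It still contains $2i_1+1$ (indeed $2i_1+1\in B_2\setminus B_1$), yet $B_2\setminus B_1\subsetneq B_2$ because $B_1\cap B_2=B_1$ contains the element $1$ and is therefore nonempty. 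Thus $B_2\setminus B_1$ is a member of $\mathcal{B}$ containing $2i_1+1$ that is strictly smaller than $B_2$, contradicting the minimality of $B_2$. Hence $1\notin B_2$.

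The step that needs attention — rather than a genuine obstacle — is exactly the bookkeeping in the second paragraph: checking that the subfamilies defining $B_1$ and $B_2$ are nonempty (this is where $\mathcal{B}\neq\{\emptyset\}$ and $2i_1+1\le n-n_0$ are used), and that $B_1\cap B_2$ is nonempty so that $B_2\setminus B_1$ is a \emph{proper} subset of $B_2$. Once those are in place, the claim is forced by the two minimality statements together with closure of $\mathcal{B}$ under symmetric difference.
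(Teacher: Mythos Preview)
Your proof is correct but takes a different route from the paper. The paper's argument is a pure counting one via \thref{lem:3.1}: if $1\in B_2$, then every $B\in\mathcal{B}$ containing $2i_1+1$ also contains $1$; since both elements lie in exactly $|\mathcal{B}|/2$ members of $\mathcal{B}$, these two subfamilies coincide, which forces $2i_1+1\in B_1$, a contradiction. Your argument instead leans on the lattice structure already established: using \thref{cor:4.8} you recognise $B_1$ and $B_2$ as the $\subseteq$-minimal members of $\mathcal{B}$ containing $1$ and $2i_1+1$ respectively, and then, assuming $1\in B_2$, you produce $B_2\setminus B_1=B_1\,\Delta\,B_2\in\mathcal{B}$ (via \thref{lem:4.3}) as a strictly smaller member still containing $2i_1+1$, violating the minimality of $B_2$. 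Your approach is a clean use of closure under symmetric difference and intersection and never appeals to the half--half column count; the paper's approach is slightly more elementary in that it does not need $B_2\in\mathcal{B}$ or closure under intersection at this step. Both arguments generalise without change to the later claims about $B_3$ and beyond.
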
 
  \begin{proof}
     Assume for the sake of contradiction, $1 \in B_2$. This implies that for all the $\frac{|\mathcal{B}|}{2}$ sets which contain the element $2i_{1}+1$ also contain the element $1$. From \thref{lem:3.1}, (number of sets in $\mathcal{B}$ that contain the element $1$) = (number of sets in $\mathcal{B}$ that contain the element $2i_{1}+1$) = $\frac{|\mathcal{B}|}{2}$. Hence, for any $B \in \mathcal{B}$, $1 \in B$ $\Longleftrightarrow$ $2i_{1}+1 \in B$. This implies that $2i_{1}+1 \in B_{1}$, which is a contradiction. Hence, $1 \notin B_{2}$ and therefore $B_{2}$ does not belong to the top $\frac{|\mathcal{B}|}{2}$ rows of $M_{1}$.
  \end{proof}
  
  \begin{claim} \thlabel{lem:4.eleven}
    $B_1 \cap B_2 = \emptyset $
  \end{claim}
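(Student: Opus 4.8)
The plan is to argue by contradiction, reusing the counting idea behind \thref{lem:4.ten}. Assume $B_1 \cap B_2 \neq \emptyset$ and pick any $j \in B_1 \cap B_2$. Since $1 \notin B_2$ by \thref{lem:4.ten}, we have $j \neq 1$; and since $B_1 = \{1, 2, \ldots, 2i_1\}$, we have $2 \le j \le 2i_1$, so in particular $j \neq 2i_1 + 1$. I would record these two facts first, because they are exactly what makes the final contradiction meaningful.

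Next I would invoke \thref{lem:3.1}: because $\mathcal{B}$ is a linear code (\thref{lem:4.3}) and each of the elements $1$, $j$, $2i_1+1$ lies in at least one member of $\mathcal{B}$ (in $B_1$, in $B_1$, and in $B_2$ respectively), the columns of $M_1$ indexed by $1$, $j$, and $2i_1+1$ are not all-zero, hence each contains exactly $\frac{|\mathcal{B}|}{2}$ ones. Equivalently, each of these three elements lies in exactly $\frac{|\mathcal{B}|}{2}$ members of $\mathcal{B}$.

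The core step is a pair of double-containment arguments. Since $B_1$ is by definition the intersection of all members of $\mathcal{B}$ that contain $1$, and $j \in B_1$, every member of $\mathcal{B}$ containing $1$ also contains $j$; as the collection of members containing $1$ and the collection of members containing $j$ both have size $\frac{|\mathcal{B}|}{2}$, they coincide, so $1 \in B \iff j \in B$ for every $B \in \mathcal{B}$. Running the same argument with $B_2$ (the intersection of all members of $\mathcal{B}$ containing $2i_1+1$, with $j \in B_2$) gives $2i_1+1 \in B \iff j \in B$ for every $B \in \mathcal{B}$. Chaining the two equivalences yields $1 \in B \iff 2i_1+1 \in B$ for all $B \in \mathcal{B}$; taking $B = B_1$ and using $1 \in B_1$ then forces $2i_1+1 \in B_1 = \{1, \ldots, 2i_1\}$, which is absurd. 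Hence $B_1 \cap B_2 = \emptyset$.

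I do not anticipate a genuine obstacle here: this is essentially a second application of the counting trick from \thref{lem:4.ten}, and the only point requiring care is that the chosen $j$ is distinct from both $1$ and $2i_1+1$ — guaranteed by \thref{lem:4.ten} and by the explicit description $B_1 = \{1,\ldots,2i_1\}$ — so that the concluding contradiction with the structure of $B_1$ is legitimate.
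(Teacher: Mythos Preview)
Your argument is correct and uses the same ingredients as the paper---contradiction via the exact count from \thref{lem:3.1} together with \thref{lem:4.ten}. The paper's version is marginally more direct: instead of chaining equivalences through $j$ to force $2i_1+1\in B_1$, it simply observes that any $x\in B_1\cap B_2$ lies in all $\tfrac{|\mathcal{B}|}{2}$ sets containing $1$ and also in $B_2$, which (by \thref{lem:4.ten}) is not among them, giving $\ge \tfrac{|\mathcal{B}|}{2}+1$ occurrences of $x$ and contradicting \thref{lem:3.1}.
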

  \begin{proof}
     Assume for the sake of contradiction, $x \in B_1 \cap B_2$. Then $x$ is present in the $\frac{|\mathcal{B}|}{2}$ rows of the matrix $M_1$ whose intersection yields $B_1$. Since $x\in B_2 $ and  $B_2$ does not belong to these $\frac{|\mathcal{B}|}{2}$ rows of $M_1$ (by \thref{lem:4.ten}). Thus, we have the element $x$ present in at least $\frac{|\mathcal{B}|}{2} + 1$ rows of $M_1$, contradicting \thref{lem:3.1}. 
  \end{proof}
 We take the rows corresponding to the sets containing the $(2i_{1}+1)^{th}$ element that are not among the first $\frac{|\mathcal{B}|}{2}$ rows in $M_1$ and arrange them below
 the top $\frac{|\mathcal{B}|}{2}$ rows to create a matrix called $M_2$ from $M_1$. Again from \thref{lem:4.5}, $|B_2|$ is even, say $2i_2$. Due to isomorphism and \thref{lem:4.eleven}, we may assume that $2i_{1}+1,$\ldots$,2i_{1}+2i_{2}$ are these $2i_{2}$ elements.\par
 If $2i_{1}+2i_{2}+1 \leq n - n_{0}$, then define,
 \begin{equation*} 
  B_3 = \bigcap_{\substack{ 2i_{1} + 2i_{2} + 1 \in B,\\ B \in \mathcal{B}}} B
  \end{equation*}
 \begin{claim}\thlabel{lem:4.twelve}
   $1 \notin B_3$ and $2i_{1}+1 \notin B_3$.
 \end{claim}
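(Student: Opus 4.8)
The plan is to repeat, almost verbatim, the argument used for \thref{lem:4.ten}, with the element $1$ (resp.\ $2i_1+1$) playing the role it did there. Two facts drive everything: first, $B_3 \in \mathcal{B}$, which holds by \thref{cor:4.8} since $B_3$ is an intersection of members of $\mathcal{B}$; second, the column rigidity of \thref{lem:3.1}, namely that (because $\mathcal{B}$ is a linear code, \thref{lem:4.3}) every column of the matrix of $\mathcal{B}$ is either all-zero or carries exactly $\frac{|\mathcal{B}|}{2}$ ones. Observe too that the columns indexed by $1$, by $2i_1+1$, and by $2i_1+2i_2+1$ are all nonzero: the third because $B_3$ is defined exactly when $2i_1+2i_2+1 \le n-n_0$, and the first two because, after the isomorphism, the only all-zero columns are the rightmost $n_0$ ones, which none of these indices reaches.

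For the first assertion, suppose for contradiction that $1 \in B_3$. By definition of $B_3$, every set of $\mathcal{B}$ that contains $2i_1+2i_2+1$ also contains $1$, so $\{B \in \mathcal{B} : 2i_1+2i_2+1 \in B\} \subseteq \{B \in \mathcal{B} : 1 \in B\}$. By \thref{lem:3.1} both collections have size exactly $\frac{|\mathcal{B}|}{2}$, hence they are equal: for every $B \in \mathcal{B}$ we have $2i_1+2i_2+1 \in B \iff 1 \in B$. Therefore $2i_1+2i_2+1$ lies in the intersection of all sets of $\mathcal{B}$ containing $1$, i.e.\ $2i_1+2i_2+1 \in B_1 = \{1,\dots,2i_1\}$, which is absurd since $2i_1+2i_2+1 > 2i_1$. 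Hence $1 \notin B_3$.

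For the second assertion, run the identical argument with $1$ replaced by $2i_1+1$ and $B_1$ replaced by $B_2 = \{2i_1+1,\dots,2i_1+2i_2\}$: assuming $2i_1+1 \in B_3$, equality of the two size-$\frac{|\mathcal{B}|}{2}$ supports forces $2i_1+2i_2+1 \in B_2$, contradicting $2i_1+2i_2+1 > 2i_1+2i_2$. This completes the claim.

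The only step that takes any thought --- and it is the common engine behind \thref{lem:4.ten}, \thref{lem:4.eleven}, and this claim --- is the passage from a one-sided containment of the two index-supports to their equality; but that is immediate from \thref{lem:3.1}, since a subset of a finite set having the same cardinality must be the whole set. So I do not expect a genuine obstacle here; the only care needed is in tracking which element plays which role, and in noting (as above) that the three relevant columns are nonzero so that \thref{lem:3.1} actually applies to them.
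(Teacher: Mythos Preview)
Your argument is correct and is exactly the approach the paper intends: it explicitly says the proof is similar to that of \thref{lem:4.ten}, and you have carried out precisely that similarity, replacing $1$ by $2i_1+1$ and $B_1$ by $B_2$ as appropriate. Your added care in noting that the relevant columns are nonzero (so that case~(ii) of \thref{lem:3.1} applies) is a detail the paper leaves implicit.
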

 The proof is similar to that of \thref{lem:4.ten}
 \begin{claim}\thlabel{lem:4.thirteen}
   $B_1 \cap B_3 = \emptyset$ and $B_2 \cap B_3 = \emptyset$.
 \end{claim}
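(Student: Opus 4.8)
The plan is to repeat, essentially verbatim, the row-counting argument behind \thref{lem:4.eleven}, running it once for $B_1\cap B_3$ and once for $B_2\cap B_3$. The single preliminary point I would record is that $B_3$ is a member of $\mathcal{B}$: it is a finite intersection of sets from $\mathcal{B}$, and $\mathcal{B}$ is closed under intersection by \thref{cor:4.8}, so $B_3$ occurs as one of the rows of $M_2$. This is the same remark that makes the earlier counting arguments for $B_1$ and $B_2$ legitimate.

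For $B_1\cap B_3=\emptyset$ I would argue by contradiction. Suppose some $x$ lies in $B_1\cap B_3$. Since $B_1=\bigcap\{B\in\mathcal{B}: 1\in B\}$, every one of the $\frac{|\mathcal{B}|}{2}$ rows of $M_2$ having a $1$ in column $1$ contains $B_1$, and hence contains $x$. But $B_3$ is itself a row of $M_2$ with $1\notin B_3$ by \thref{lem:4.twelve}, so $B_3$ is distinct from all of those rows; since $x\in B_3$, column $x$ of $M_2$ then carries at least $\frac{|\mathcal{B}|}{2}+1$ ones, which is impossible by \thref{lem:3.1}. The proof of $B_2\cap B_3=\emptyset$ is word-for-word the same, with the element $1$ replaced throughout by $2i_1+1$: the $\frac{|\mathcal{B}|}{2}$ rows meeting column $2i_1+1$ all contain $x$ once $x\in B_2$, while $B_3$ is a further row not among them because $2i_1+1\notin B_3$, again by \thref{lem:4.twelve}.

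I do not anticipate a genuine obstacle; all the weight is carried by \thref{lem:4.twelve} (which ensures $B_3$ avoids both of the anchor elements $1$ and $2i_1+1$, and therefore lies outside the two relevant blocks of $\frac{|\mathcal{B}|}{2}$ rows) and by \thref{cor:4.8} (which makes $B_3$ an honest row of $M_2$). The only thing that even needs a second thought is that the particular ordering of the rows in $M_2$ is irrelevant here — and it is, since \thref{lem:3.1} is a statement about the columns of a linear code and does not see the order in which the codewords are listed.
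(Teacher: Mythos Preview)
Your proposal is correct and is precisely the argument the paper has in mind: the paper's own proof of \thref{lem:4.thirteen} simply reads ``The proof is again similar to that of \thref{lem:4.eleven},'' and what you have written is that same row-counting argument spelled out, invoking \thref{lem:4.twelve} in place of \thref{lem:4.ten} and \thref{cor:4.8} to ensure $B_3$ is genuinely a row.
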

 The proof is again similar to that of \thref{lem:4.eleven}.\par
 We take the rows corresponding to the sets containing the $(2i_{1} +
 2i_{2} + 1) ^{th}$ element that are not among the first $r$ rows $(r >
 \frac{|\mathcal{B}|}{2})$ in $M_2$ which contain the elements $1$ or
 $2i_{1}+1$ and arrange them below the top $r$ rows of $M_2$ to create a
 matrix called $M_3$ from $M_2$. From \thref{lem:4.5} and the definition
 of $B_3$, we have $|B_3|=2i_3$, $i_3 \geq 1$. Due to isomorphism and
 \thref{lem:4.thirteen}, we may assume that
 $2i_{1}+2i_{2}+1,$\ldots$,2i_1+2i_2+2i_3$ are these $2i_3$ elements.
 \par
 We continue in this manner for $k$ steps by constructing sets
 $B_1, \ldots, B_k$ and matrices $M_1, \ldots ,M_k$, where $k \geq 1$, until we have $2i_1 + \cdots + 2i_k = n - n_0$.
 Observe that $B_1,\ldots, B_k$ and $P=\{ n-n_0+1,\ldots,n\}$ is a partition of $[n]$.
 
 \begin{center}
    \includegraphics[width=0.4\linewidth]{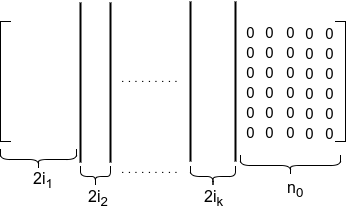}\label{fig:1}
    \captionof{figure}{Partitioning the universe and thereby the columns of $M_k$}
  \end{center}

 \begin{claim}\thlabel{lem:4.fourteen}
   For any set $B \in \mathcal{B}$, $j\in [k]$, we have $B \cap B_j \in \{ \emptyset, B_j \}$. Further, $B \cap P = \emptyset$.
 \end{claim}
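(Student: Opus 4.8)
The plan is to dispatch the two assertions separately. The statement $B\cap P=\emptyset$ is immediate: by construction $P=\{n-n_0+1,\ldots,n\}$ indexes exactly the columns of $M_0$ (equivalently $M_k$) that are identically $0$, so no member of $\mathcal{B}$ contains an element of $P$, whence $B\cap P=\emptyset$ for every $B\in\mathcal{B}$.

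For the first assertion I would fix $j\in[k]$ and let $m_j$ be the element whose defining intersection produces $B_j$, i.e. $B_j=\bigcap_{B\in\mathcal{B},\,m_j\in B}B$ (so $m_j=1$ for $j=1$, $m_j=2i_1+1$ for $j=2$, and so on). Write $\mathcal{B}[x]:=\{B\in\mathcal{B}:x\in B\}$. The crux is the identity $\mathcal{B}[x]=\mathcal{B}[m_j]$ for every $x\in B_j$. One inclusion is definitional: every $B\in\mathcal{B}[m_j]$ contains $B_j$ by the definition of $B_j$, hence contains $x$, so $\mathcal{B}[m_j]\subseteq\mathcal{B}[x]$. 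For the reverse, note that $\mathcal{B}$ is a linear code by \thref{lem:4.3}, and that neither the column indexed by $x$ nor that indexed by $m_j$ is all-zero (both lie in $B_j\subseteq[n]\setminus P$ and $B_j\in\mathcal{B}$ is nonempty since it contains $m_j$); hence \thref{lem:3.1}(ii) applies and gives $|\mathcal{B}[x]|=|\mathcal{B}[m_j]|=\tfrac{|\mathcal{B}|}{2}$. An inclusion between finite sets of equal size is an equality, so $\mathcal{B}[x]=\mathcal{B}[m_j]$.

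From this I conclude: for any $B\in\mathcal{B}$ and any $x\in B_j$, $x\in B\iff m_j\in B$. Therefore, if $m_j\in B$ then every element of $B_j$ lies in $B$, i.e. $B\cap B_j=B_j$; and if $m_j\notin B$ then no element of $B_j$ lies in $B$, i.e. $B\cap B_j=\emptyset$. Thus $B\cap B_j\in\{\emptyset,B_j\}$, completing the proof.

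The argument is short and essentially routine once the column-count $|\mathcal{B}[x]|=|\mathcal{B}|/2$ is in hand; the only mild point to get right is to establish one family of rows as a subset of the other \emph{before} invoking equality of cardinalities, and to check the relevant columns are nontrivial so that case (ii) rather than case (i) of \thref{lem:3.1} governs. I do not anticipate a genuine obstacle here — the real work has already been done in \thref{lem:3.1}, \thref{lem:4.3} and \thref{cor:4.8}.
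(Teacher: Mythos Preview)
Your proof is correct and follows essentially the same line as the paper's own argument: both use that every $x\in B_j$ lies in all $\tfrac{|\mathcal{B}|}{2}$ members of $\mathcal{B}$ containing the defining element $m_j$, and then invoke \thref{lem:3.1}(ii) to conclude that $x$ lies in no other member of $\mathcal{B}$. Your write-up is somewhat more explicit (in particular you check that the relevant columns are nontrivial before applying \thref{lem:3.1}(ii)), but the underlying idea is identical.
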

 \begin{proof}
    From the definition of $P$, we have $B \cap P = \emptyset$. Let $j \in [k]$. Since $B_{j}$ is equal to the intersection of some $\frac{|\mathcal{B}|}{2}$ sets in $\mathcal{B}$, we have $B_j$ present as a subset of all these $\frac{|\mathcal{B}|}{2}$ sets. Applying \thref{lem:3.1}, we can say that no element of $B_j$ is present in any set in $\mathcal{B}$ other than these $\frac{|\mathcal{B}|}{2}$ sets. Hence, the claim.    
 \end{proof}
 
 From \thref{lem:4.fourteen}, observe that $ S=\{ B_{1},\ldots,B_{k}\}$ forms a basis of the row space of the matrix $M_k$.
The advantage of such a
    ``disjoint basis" is that the bisection in one part is independent of another.

\begin{center}
    \includegraphics[width=0.4\linewidth]{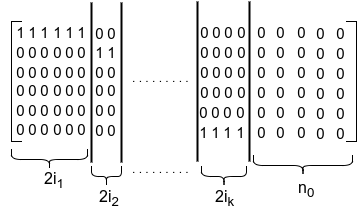}\label{fig:2}
    \captionof{figure}{Basis for the code $\mathcal{B}$}
  \end{center}

\begin{claim}\thlabel{lem:4.fifteen}
 A set $A \in \mathcal{A}$ bisects every set in $\mathcal{B}$ if and only if it bisects every set in  the basis $S$ of $\mathcal{B}$.
\end{claim}
\begin{proof}
   The forward direction is straightforward as $S \subseteq \mathcal{B}$. For the opposite direction, let $A \in \mathcal{A}$ be a set that bisects every member of $S$. Since the sets corresponding to the members in $S$ are disjoint, any $B \in \mathcal{B}$ can be written as a union of some of these sets.\\ Let $B= B_1 \cup \cdots \cup B_l$, where $\{B_1,\ldots, B_l\}\subseteq S$. Then,
 
  $|A\cap B|=|A \cap (\bigcup\limits_{j=1}^{l} B_j)|  = \sum\limits_{j=1}^{l} |A \cap B_j|
   = \sum\limits_{j=1}^{l} \frac{|B_j|}{2}
  = \frac{|\bigcup\limits_{j=1}^{l} B_j|}{2}
  = \frac{|B|}{2}$
 
\end{proof}

 Since each set $A \in \mathcal{A}$ bisects the sets $B_1,\ldots ,B_k$ and $P$, from \thref{lem:4.fifteen}, the set $A$ may contain any of the $2^{n_0}$ subsets of $P$, and $|A\cap B_1|=i_1,\ldots,|A\cap B_k|=i_k$. Since dim$(\mathcal{B})=k$, by \thref{lem:4.3}, we have  $|\mathcal{B}| = 2^k$.

\begin{align} 
 |\mathcal{A}||\mathcal{B}| & = \Big ( 2^{n_0} \cdot \prod\limits_{j=1}^{k} \binom{2i_j}{i_j} \Big) \cdot 2^k  \label{eq:7}
\end{align}
Recall that $\sum\limits_{j=1}^{k} 2i_j=n-n_0$. Right hand side of Equation  \eqref{eq:7}, is equal to $2^n$ if and only if $i_j=1$, $\forall j \in [k]$.

Thus, if $\mathcal{B}\neq \{\emptyset\}$, then $(\mathcal{A}_k,\mathcal{B}_k),$ $k \geq 1$, defined in the statement of the theorem are the only maximal pairs. This completes the proof of \thref{thm:1.2}. 

  \end{proof}

\end{section}
\begin{section}{Tight upper bound on $M_{\frac{c}{d}}(n)$ when $\mathcal{B}$ is $k$-uniform and characterization of the cases when the bound is achieved }
Let $(\mathcal{A},\mathcal{B})$ be a $\frac{c}{d}$ cross-intersecting pair of families of subsets of $[n]$, where $\frac{c}{d} \in [0,1]$ is an irreducible fraction. In this section, we deal with the scenario when $\mathcal{B}$ is $k$-uniform, where $0 < k \leq n$.
Since $\mathcal{B}$ is $k$-uniform, for any $A\in \mathcal{A}$ and any $B \in \mathcal{B}$, $|A \cap B|= \frac{ck}{d}=l$. Since $c$ is relatively prime with $d$, and $|A\cap B|$ is an integer, we have $k$ divisible by $d$. Therefore, we have a uniformly cross intersecting pair of families.\par
Alon and Lubetzky in \cite{Alon2009} found a tight upper bound for the case of uniformly cross intersecting families and fully characterized
the cases when the bound is achieved in the following theorem:
\begin{theorem}\thlabel{thm:5.1} [Theorem 1.1 in \cite{Alon2009}]
 There exists some $l_{0}>0$ such that, for all $l \geq l_{0}$, every $l$-cross intersecting pair $\mathcal{A},\mathcal{B} \subset 2^{[n]}$ satisfies: 
 \begin{center}
     $|\mathcal{A}||\mathcal{B}|\leq \binom{2l}{l}2^{n-2l}$
 \end{center}
 Furthermore, if $|\mathcal{A}||\mathcal{B}|= \binom{2l}{l}2^{n-2l}$, then there exists some choice of parameters $\kappa,\tau,n^{'}$:
 \begin{center}
     $\kappa \in \{ 2l-1, 2l \}, \tau \in \{0,\cdots,\kappa\}$\\
     $\kappa+\tau \leq n^{'} \leq n$
 \end{center}
 such that upto a relabelling of the elements of $[n]$ and swapping $\mathcal{A},\mathcal{B}$, the following holds:
 \begin{center}
 $\mathcal{A}=\{ \bigcup\limits_{T\in J} \, {T} : J \subset \{
 \{1,\kappa+1\},\cdots,\{\tau, \kappa+\tau\},
 \{\tau+1\},\cdots,\{\kappa\}\},|J|=l\}\:\times\:2^{X}$,\\
 $\mathcal{B}=\{L\cup\{\tau+1,\cdots, \kappa\}:L\subset
 \{1,\cdots,\tau,\kappa+1,\cdots,\kappa+\tau\}, |L\cap\{i,\kappa+i\}|=1$
 for all $i \in [\tau] \}\:\times\:2^{Y}$
 \end{center}
 where $X=\{ \kappa + \tau +1, \cdots, n^{'}\}$ and $Y=\{n^{'}+1,\cdots,n\}.$
\end{theorem}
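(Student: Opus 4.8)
The plan is to work over $\mathbb{R}$ rather than $\mathbb{F}_2$, since the $\mathbb{F}_2$ argument used earlier in the paper only yields the crude bound $2^n$ and cannot see the lower-order factor $\binom{2l}{l}2^{-2l}\asymp 1/\sqrt{l}$ that distinguishes an exact intersection size from a merely even one. Writing $M$ and $N$ for the $0$-$1$ matrices whose rows are the characteristic vectors of $\mathcal{A}$ and $\mathcal{B}$, the $l$-cross-intersecting hypothesis is exactly $MN^{T}=l\,J$, a rank-one matrix. Fixing $A_0\in\mathcal{A}$, $B_0\in\mathcal{B}$ and centering, a one-line computation (using $Mb_0=l\mathbf{1}$, $Na_0=l\mathbf{1}$, $\langle a_0,b_0\rangle=l$) gives $(M-\mathbf{1}a_0^{T})(N-\mathbf{1}b_0^{T})^{T}=0$, i.e. the affine hulls satisfy $\mathrm{span}_{\mathbb{R}}(\mathcal{A}-A_0)\perp\mathrm{span}_{\mathbb{R}}(\mathcal{B}-B_0)$. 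Hence if $d_A,d_B$ denote the affine dimensions of the two families then $d_A+d_B\le n$, and since an affine subspace of dimension $d$ contains at most $2^{d}$ vertices of the cube, $|\mathcal{A}|\,|\mathcal{B}|\le 2^{d_A}2^{d_B}\le 2^{n}$. This is the skeleton; everything afterward is about recovering the missing $\sqrt{l}$.

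To sharpen the cube-vertex count I would exploit a set of minimum size in each family. If $B_0\in\mathcal{B}$ has $|B_0|=m$, then every $A\in\mathcal{A}$ meets $B_0$ in exactly $l$ elements, so the projection of $\mathcal{A}$ onto the coordinates of $B_0$ lands in $\binom{B_0}{l}$ and contributes at most $\binom{m}{l}$ patterns rather than $2^{m}$; thus $|\mathcal{A}|\le \binom{m}{l}2^{\,n-m}$. The elementary one-variable fact that $f(m)=\binom{m}{l}2^{-m}$ is unimodal with maximum $\binom{2l}{l}2^{-2l}$ attained exactly at $m\in\{2l-1,2l\}$ (the source of the two values of $\kappa$ in the statement) is what injects the central binomial coefficient. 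The hard part, and the main obstacle, is to make this factor appear \emph{once} rather than squared: applying the minimum-set bound to $\mathcal{A}$ and $\mathcal{B}$ separately double-counts the common ``free'' directions already accounted for by $d_A+d_B\le n$, and only yields $\big(\binom{2l}{l}2^{n-2l}\big)^2$. The resolution I would pursue is to couple the two estimates using the orthogonal decomposition above: split $[n]$ into coordinates identically $0$ across $\mathcal{B}$ (free for $\mathcal{A}$), coordinates identically $0$ across $\mathcal{A}$ (free for $\mathcal{B}$), and a common ``active'' block, then argue that the active block alone carries the constraint $|A\cap B|=l$ and contributes a factor at most $\binom{2l}{l}2^{-2l}$ beyond its dimension budget, while each free coordinate contributes a clean factor $2$ to exactly one family.

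An alternative route to the same constant, which I find cleaner for locating where the $\sqrt{l}$ is lost, is entropic. Choosing $A\in\mathcal{A}$ and $B\in\mathcal{B}$ independently and uniformly gives $\log_2|\mathcal{A}|+\log_2|\mathcal{B}|=H(A)+H(B)=H(A,B)$, and the hypothesis says the integer statistic $S=\sum_i A_iB_i$ equals $l$ with probability one. The weak bound $2^n$ corresponds to using only $\mathbb{E}[S]=l$; the further reduction of $\tfrac12\log_2(\pi l)+o(1)$ bits below the budget $n$ must come from the fact that $S$ has \emph{zero variance}, even though in the extremal configuration an unconstrained model would spread $S$ over an interval of length $\Theta(\sqrt{l})$. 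Making this anticoncentration rigorous -- quantifying the entropy cost of pinning a sum of nearly independent indicators to one value via a local central limit estimate -- is the technical heart of the argument, and is precisely why the theorem is only claimed for $l\ge l_0$.

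For the characterization I would run the equality analysis backward through whichever bound is used. Equality in the minimum-set step forces a smallest member of one family to have size exactly $\kappa\in\{2l-1,2l\}$ and forces its trace under the other family to be \emph{all} of $\binom{[\kappa]}{l}$, while equality in the dimension/cube-vertex step forces each family to be a full combinatorial subcube on its free coordinates. Feeding ``$\mathcal{A}$ realizes every $l$-subset pattern on $[\kappa]$'' back into $|A\cap B|=l$ then pins down $\mathcal{B}$ on $[\kappa]$: the middle coordinates $\{\tau+1,\dots,\kappa\}$ must lie in every $B$, and the remaining coordinates must organize into pairs $\{i,\kappa+i\}$ from each of which every $B$ takes exactly one element, reproducing the block structure in the statement. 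The role of $l\ge l_0$ is to exclude sporadic small-$l$ extremal families, so I expect the last step to be a stability argument: first show that any near-extremal pair is close to the described structure, then bootstrap closeness to exact equality for large $l$.
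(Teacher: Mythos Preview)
The paper does not prove this statement. Theorem~5.1 is quoted verbatim as ``Theorem 1.1 in \cite{Alon2009}'' and is used as a black box to derive Theorem~1.3; there is no argument in the paper to compare your proposal against. So the relevant question is not whether your sketch matches the paper, but whether it matches (or could be completed to match) the Alon--Lubetzky proof.

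On that front, your proposal is an outline with the central step admittedly missing. The rank-one observation $MN^{T}=lJ$ and the resulting affine-orthogonality $d_A+d_B\le n$ are correct and are indeed the starting point, and the single-family bound $|\mathcal{A}|\le\binom{m}{l}2^{n-m}$ with the unimodality of $\binom{m}{l}2^{-m}$ is the right way to see where $\binom{2l}{l}2^{-2l}$ comes from. But you explicitly flag that you do not know how to make the $\binom{2l}{l}2^{-2l}$ factor appear once rather than squared, and neither the ``active block'' decomposition nor the entropic anticoncentration heuristic is turned into an actual inequality. That coupling step is precisely the content of the Alon--Lubetzky argument; without it you only recover $|\mathcal{A}||\mathcal{B}|\le 2^n$, which is the bound the present paper already proves by the $\mathbb{F}_2$ method. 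Your equality analysis is plausible in spirit but presupposes the sharp bound, so it inherits the same gap.
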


Let $(\mathcal{A},\mathcal{B})$ be a $\frac{c}{d}$ cross-intersecting
family where $\mathcal{B}$ is $k$-uniform. From \thref{thm:5.1}, there
exists a $k_{0}>0$ such that if $\frac{ck}{d}=l>k_{0}$, then
$|\mathcal{A}||\mathcal{B}|\leq \binom{2l}{l}2^{n-2l}$. Consider the case
when $\mathcal{B}$ corresponds to $\mathcal{B}$ of \thref{thm:5.1}. If
$|\mathcal{A}||\mathcal{B}| = \binom{2l}{l}2^{n-2l}$, then $n'= n$,
$Y=\emptyset$, and $k=\kappa$ in the statement of \thref{thm:5.1}.
Since $l=\frac{ck}{d}$ and $k \in \{ \frac{2ck}{d}-1,\frac{2ck}{d}\}$, we have the following two cases:\\\\
\textbf{Case 1:} $k=\frac{2ck}{d}-1$. Then, $(k+1)d=2ck$. Since $gcd(c,d)=1$ and $gcd(k,k+1)=1$, we have $k|d|2k$. Thus, $d=k$ or $d=2k$. We claim that $d=2k$ is an invalid case. This is because, when $d=2k$, we have $c=k+1$. Since $gcd(c,d)=1$, $k$ cannot be odd. And if $k$  is even, then $l=\frac{ck}{d}=\frac{k+1}{2}$ is not an integer. So, the only valid case is $d=k$, $c=\frac{k+1}{2}=l$ and $k$ is an odd integer.\\
\textbf{Case 2:} $k=\frac{2ck}{d}$. Then, $\frac{c}{d}=\frac{1}{2}$, that is $(\mathcal{A},\mathcal{B})$ is a cross bisecting pair.  Since $l=\frac{ck}{d}=\frac{k}{2}$ is an integer, $k$ must be even in this case.\\
If $\mathcal{B}$ corresponds to $\mathcal{A}$ of \thref{thm:5.1}, we have $X=\emptyset$, $\tau=0$, $\mathcal{B}$ is $k(=l)$-uniform, $l=\frac{ck}{d}$. Thus, we have $\frac{c}{d}=1$, $\mathcal{A}=\{\{1,\ldots,\kappa\}\}\times2^Y$ where $Y=\{\kappa+1,\ldots,n\}$ and $\mathcal{B}=\binom{[\kappa]}{k}$, $\kappa \in \{ 2k-1, 2k\}$ up to a relabelling of the elements.\\

This leads us to the main result of this section. \vspace{0.2in} \\
\textbf{Statement of \thref{thm:1.3}:}
\itshape{Let $(\mathcal{A}$,$\mathcal{B})$ be a $\frac{c}{d}$-cross intersecting pair of families of subsets of $[n]$. Let $\mathcal{B}$ be $k$-uniform. Then, there exists some $k_{0}>0$, such that for $k>k_{0}$ we have 
 \begin{center}
     $|\mathcal{A}||\mathcal{B}|\leq \binom{\frac{2ck}{d}}{\frac{ck}{d}}2^{n-\frac{2ck}{d}}$
 \end{center}
 and the bound is tight if and only if, either $(a)$ or $(b)$ hold:
 \begin{center}
 \begin{enumerate}[(a)]
     \item  When $\frac{c}{d}=1$, $\mathcal{A}=\{\{1,\ldots,\kappa\}\}\times2^Y$, $\mathcal{B}=\binom{[\kappa]}{k}$ where $Y=\{\kappa+1,\ldots,n\}$ and $\kappa \in \{ 2k-1, 2k\}$ up to a relabelling of the elements of $[n]$.

 \item When $\frac{c}{d}\neq 1$:
 
     \begin{enumerate}[(i)]
     \item If $k$ is even, $c=1$, $d=2$, $\frac{ck}{d}=\lceil\frac{k}{2}\rceil$,
     \item If $k$ is odd, $c=\frac{k+1}{2}$, $d=k$, $\frac{ck}{d}=\lceil\frac{k}{2}\rceil$, 
      \end{enumerate}
\end{enumerate}
 \end{center}
 and for both the cases($(i)$ and $(ii)$), 
 there exists some $\tau$ such that, 
 $k+\tau\leq n$ 
and up to a relabelling of the elements of $[n]$,
\begin{center}
    $\mathcal{A}=\{ \cup_{T\in J} \, {T} : J \subset \{
    \{1,k+1\},\ldots,\{\tau, k+\tau\},
    \{\tau+1\},\ldots,\{k\}\},|J|=\lceil\frac{k}{2}\rceil\}\times2^{X}$
\end{center}
where $X=\{k+\tau+1,\ldots,n\}$ and
\begin{center}
$\mathcal{B}=\{L\cup\{\tau+1,\ldots, k\}:L\subset \{1,\ldots,\tau,k+1,\ldots,k+\tau\}, |L\cap\{i,k+i\}|=1$ for all $i \in [\tau] \}$.
    
\end{center}
}

\end{section}

\begin{section}{Discussion}
What are those pairs of $\frac{c}{d}$-cross intersecting families $(\mathcal{A},\mathcal{B})$ which achieve $|\mathcal{A}||\mathcal{B}|=2^n$ (equal to the upper bound for $\mathcal{M}_{\frac{c}{d}}(n)$ proved in \thref{thm:1.1})? In the introduction we characterize such families when $\frac{c}{d}=0$ and $\frac{c}{d}=1$. In \thref{thm:1.2}, we characterize such families when $\frac{c}{d}=\frac{1}{2}$. From \thref{thm:1.3}, we see that when $\mathcal{B}$ is $k$-uniform, $|\mathcal{A}||\mathcal{B}|$ is maximized when $\frac{c}{d}$ is $1$ or nearly $\frac{1}{2}$($\frac{1}{2}$ or $\frac{1}{2}+\frac{1}{2k}$). For $\frac{c}{d} \in (0,1)$, besides the case $\mathcal{A}=2^{[n]}$, $\mathcal{B}=\{\emptyset\}$, is $|\mathcal{A}||\mathcal{B}|=2^n$ achieved only when $\frac{c}{d}$ is close to $\frac{1}{2}$?

\end{section}

\bibliographystyle{ieeetr}

\end{document}